\documentclass[11pt,oneside,english]{amsart}
\usepackage[T1]{fontenc}
\usepackage[latin9]{inputenc}
\usepackage[active]{srcltx}
\usepackage{amsthm}
\usepackage{amstext}
\usepackage{amssymb}
\usepackage{esint}
\usepackage{hyperref}

\makeatletter

\theoremstyle{plain}
\newtheorem{thm}{\protect\theoremname}[section]

\theoremstyle{plain}
\newtheorem{lem}[thm]{\protect\lemmaname}

\theoremstyle{remark}
\newtheorem{rem}[thm]{\protect\remarkname}

\usepackage{array}\usepackage{float}\usepackage{graphics}\usepackage{enumitem}\usepackage{fancyhdr}\usepackage{amsthm}\usepackage{amsbsy}
\usepackage[normalem]{ulem}

\addtolength{\textheight}{2cm}
\addtolength{\textwidth}{2cm}
\addtolength{\voffset}{-1cm}
\addtolength{\hoffset}{-1cm}

%
\setlength{\extrarowheight}{0.14cm}

\numberwithin{equation}{section}
\numberwithin{figure}{section}


\theoremstyle{plain}

 \newtheorem{corollary}[thm]{Corollary}

\theoremstyle{definition}

\theoremstyle{remark}



\newcommand{\mfp}{\mathfrak{p}}

\newcommand{\C}{\ensuremath{\mathbb{C}}}
\newcommand{\F}{\ensuremath{\mathbb{F}}}

\newcommand{\Fq}{\ensuremath{\mathbb{F}_q}}
\newcommand{\N}{\ensuremath{\mathbb{N}}}
\newcommand{\Q}{\ensuremath{\mathbb{Q}}}
\newcommand{\R}{\ensuremath{\mathbb{R}}}

\newcommand{\T}{\ensuremath{\mathcal{T}}}

\newcommand{\Z}{\ensuremath{\mathbb{Z}}}

\newcommand{\bfa}{\ensuremath{\mathbf{a}}}

\newcommand{\bfy}{\ensuremath{\mathbf{y}}}

\newcommand{\bfG}{{\bf G}}

\newcommand{\bfY}{\ensuremath{\mathbf{Y}}}

\newcommand{\calO}{\ensuremath{\mathcal{O}}}

\newcommand{\calR}{\ensuremath{\mathcal{R}}}
\newcommand{\calT}{\ensuremath{\mathcal{T}}}

\newcommand{\calZ}{\ensuremath{\mathcal{Z}}}

\newcommand{\Gri}{\ensuremath{\mathcal{O}}}

\newcommand{\dotcup}{\ensuremath{\mathbin{\mathaccent\cdot\cup}}}
\newcommand{\bigdotcup}{\ensuremath{\mathop{\dot{\bigcup}}}}

\newcommand{\lri}{\mathfrak{o}}


\newcommand{\transpose}[1]{{#1}^\mathrm{t}}

%
\renewcommand{\epsilon}{\varepsilon}
\renewcommand{\phi}{\varphi}

\DeclareMathOperator{\Sp}{Sp}
\DeclareMathOperator{\GL}{GL}

\DeclareMathOperator{\Mat}{Mat}

\DeclareMathOperator{\real}{Re}

\DeclareMathOperator{\Pfaff}{Pf}

\DeclareMathOperator{\Spec}{Spec}



\newcommand{\group}{Q}
\newcommand{\aalpha}{z}

\makeatother

\usepackage{babel}
  \providecommand{\lemmaname}{Lemma}
  \providecommand{\remarkname}{Remark}
\providecommand{\theoremname}{Theorem}

\begin{document}
\title[Representation zeta functions of some nilpotent
groups]{Representation zeta functions of some nilpotent groups
  associated to prehomogeneous vector spaces}

\keywords{finitely generated nilpotent groups, representation zeta
  functions, Igusa local zeta functions, $\mfp$-adic integration,
  prehomogeneous vector spaces} \subjclass[2000]{22E55, 20F69, 05A15, 11S90, 11M41}

\begin{abstract}
  We compute the representation zeta functions of some finitely
  generated nil\-po\-tent groups associated to unipotent group schemes
  over rings of integers in number fields. These group schemes are
  defined by Lie lattices whose presentations are modelled on certain
  prehomogeneous vector spaces. Our method is based on evaluating
  $\mfp$-adic integrals associated to certain rank varieties of
  matrices of linear forms.
\end{abstract}
\author{Alexander Stasinski and Christopher Voll}

\address{Alexander Stasinski, Department of Mathematical Sciences,
  Dur\-ham University, Dur\-ham, DH1 3LE, UK}

\email{alexander.stasinski@durham.ac.uk}

\address{Christopher Voll, Fakult\"{a}t f\"{u}r Mathematik,
  Universit\"{a}t Bielefeld, Postfach 100131, D-33501 Bielefeld,
  Germany} \email{voll@math.uni-bielefeld.de}


\maketitle
\setcounter{tocdepth}{4}
\thispagestyle{empty}

\section{\label{sec:Introduction}Introduction}
Let $G$ be a finitely generated torsion-free nilpotent group (or
$\calT$-group, for short). The representation zeta function of $G$ is
the Dirichlet series
\[
\zeta_{G}(s):=\sum_{n=1}^{\infty}\widetilde{r}_{n}(G)n^{-s},
\]
where $\widetilde{r}_{n}(G)$ denotes the number of twist-isoclasses of
complex $n$-dimensional irreducible representations of $G$ and $s$ is
a complex variable. If $K$ is a number field with ring of integers
$\calO$ and $\bfG$ is a unipotent group scheme over $\calO$, then
$\bfG(\calO)$ is a $\calT$-group and $\zeta_{\bfG(\calO)}(s)$ has an
Euler product indexed by the non-zero prime ideals of $\calO$:
\begin{equation}\label{equ:fine.euler}
\zeta_{\bfG(\calO)}(s)=\prod_{\mfp}\zeta_{\bfG(\calO_{\mfp})}(s),
\end{equation}
where $\zeta_{\bfG(\calO_{\mfp})}(s)$ is the zeta function enumerating
twist-isoclasses of continuous irreducible representations of the
pro-$p$ group $\bfG(\calO_{\mfp})$; see
\cite[Proposition~2.2]{StasinskiVoll-RepsTgps}.  General properties of
representation zeta functions of the form $\zeta_{\bfG(\calO)}(s)$
were studied in \cite{StasinskiVoll-RepsTgps}. In particular, it was
shown there that for all but finitely many $\mfp$ the factors of the
Euler product~\eqref{equ:fine.euler} are rational functions in
$q^{-s}$, where~$q= |\calO/\mfp|$.

Almost all of the Euler factors may be expressed in terms of
$\mfp$-adic integrals associated to rank varieties of matrices of
linear forms; cf.\ \cite[Corollary~2.11]{StasinskiVoll-RepsTgps}. In
general, computing these integrals is a hard problem. In
\cite[Theorem~B]{StasinskiVoll-RepsTgps} we computed the
representation zeta functions for three infinite, explicitly described
families. One motivation for these computations was the idea to
construct and study $\calT$-groups through presentations modelled on
prehomogeneous vector spaces (PVSs) and their relative
invariants. Indeed, the study of $\mfp$-adic integrals associated to
the relative invariants of PVSs has a comparatively long history; see
\cite{Denef/91} and \cite{Kimura/03} for details. These integrals also
served as test cases for a number of far-reaching conjectures and
explicit formulae for many such integrals are known.

In the current paper as well as in
\cite[Theorem~B]{StasinskiVoll-RepsTgps}, we consider $\calT$-groups
which are groups of rational points of unipotent group schemes defined
in terms of certain $\Z$-Lie lattices. The latter are defined by
(antisymmetric) matrices of linear forms designed such that their
Pfaffians equal, or are closely related to, the relative invariants of
PVSs.

The unipotent group schemes $F_{n,\delta},G_{n},H_{n}$ defined in
\cite[Definition~1.2]{StasinskiVoll-RepsTgps} are modelled on the
first three types of irreducible PVSs listed in the appendix of
Kimura's book \cite{Kimura/03}; see also \cite[Examples 2.1, 2.2,
2.3]{Kimura/03}. For these three infinite families, we established
connections between the $\mfp$-adic integrals associated to relative
invariants of the PVSs and the representation zeta functions of the
associated groups; see \cite[Section~6]{StasinskiVoll-RepsTgps}.

In the present paper we compute the representation zeta functions of
certain $\calT$-groups connected to PVSs of the
form $$(\Sp_{m}\times\GL_{2n}, \Lambda_1\otimes\Lambda_1, V(2m)\otimes
V(2n)),$$ defined in \cite[Example~2.13]{Kimura/03}, for $m\in\{1,2\}$
and $n=1$.  Note that our use of $m$ and $n$ is opposite to that of
Kimura but consistent with Igusa~\cite[p.~165]{Igusa/00}. As Kimura
explains, for $m=n$ these PVSs are special cases of the class of PVSs
described in \cite[Example~2.1]{Kimura/03} and the cases where $m > n
> m/2$ are (castling) equivalent to those where $m \geq 2n \geq 2$, so
one may restrict to the latter. Our construction only requires $m\geq
n\geq 1$.  For $m\geq 2n\geq 2$ the PVSs are irreducible, of type (13)
in Kimura's list. After the PVSs of types (1), (2) and (3), type (13)
is the first in Kimura's list to comprise an infinite family of PVSs.

We now give details of our constructions. Let $m\geq n\geq1$ be
integers.  Let $(Y_{ij})\in\Mat_{2m,2n}(\Z[\bfY_0])$ be the generic
matrix in variables $\bfY_0=(Y_{11},\dots,Y_{2m,2n})$ and
$$J_{m}= 
\begin{pmatrix}0 & 1\\
-1 & 0\\
 &  & \ddots\\
 &  &  & 0 & 1\\
 &  &  & -1 & 0
\end{pmatrix}
\in\Mat_{2m}(\Z)$$ be the structure matrix of the standard
non-degenerate symplectic form on $\Z^{2m}$, viz.\ the matrix
$(a_{ij})$ with $a_{2i-1,2i}=1$ and $a_{2i,2i-1}=-1$ for $1\leq i \leq
m$ and all other entries equal to zero.  The relative invariant of the relevant 
PVS is the
homogeneous polynomial
$$f(\bfY_0)=\Pfaff(\transpose{(Y_{ij})}J_{m}(Y_{ij}))\in\Z[\bfY_0]$$
of degree $2n$, where $\Pfaff$ denotes the Pfaffian of an
antisymmetric matrix.  Ideally, we are looking to define an
antisymmetric matrix of linear homogeneous forms whose Pfaffian is
equal to~$f$. If $m=n$, then $f(\bfY_0)=\det((Y_{ij}))$ and the
matrix
\begin{equation}\label{equ:comm.matrix.G2}
  \left( \begin{matrix} 0 & (Y_{ij}) \\ -\transpose{(Y_{ij})} & 0 \end{matrix}\right)\in\Mat_{4n}{\Z[\bfY_0]}
\end{equation}
has the desired property. In the general case $m>n$, we have not been
able to find such a matrix, and it is conceivable that none
exists. Instead we consider a matrix whose Pfaffian is very close
to~$f$. More precisely, let $Y$ be a variable, $\bfY=(Y,\bfY_0)$,
and define
\begin{equation}\label{equ:comm.matrix}
\calR(\bfY) = \begin{pmatrix}YJ_{m} & (Y_{ij})\\
  -\transpose{(Y_{ij})} & 0
\end{pmatrix} \in \Mat_{2(m+n)}(\Z[\bfY]).
\end{equation}
Then $\Pfaff(\calR(\bfY))=Y^{m-n}f(\bfY_0)$; see
Lemma~\ref{lem:Pfaffian}. The matrix $\calR(\bfY)$ defines the
class-$2$-nilpotent $\Z$-Lie lattice
\begin{multline*}
  \mathcal{Q}_{m,n} = \langle x_{r},y_{ij},y \mid
  1 \leq r\leq 2(m+n), 1 \leq i \leq 2m, 1 \leq j \leq 2n, \\
  \forall\, 1 \leq a,b \leq 2(m+n): \;[x_a,x_b] = \calR(\bfy)_{a,b},
  \; y,y_{ij} \textrm{ central}\rangle.
\end{multline*}
This Lie lattice in turn defines a unipotent group scheme
$\group_{m,n}$ over $\Z$ via the Hausdorff series;
cf.~\cite[Section~2.12]{StasinskiVoll-RepsTgps}. Note that, in this
vein, the matrix~\eqref{equ:comm.matrix.G2} defines the $\Z$-Lie
lattice $\mathcal{G}_2$ in
\cite[Definition~1.2]{StasinskiVoll-RepsTgps}.

Let $K$ be a number field with ring of integers $\calO$. Then
$\group_{m,n}(\calO)$ is a $\calT$-group of nilpotency class $2$.  For
a non-zero prime ideal $\mfp$ of $\calO$, we write $\lri=\calO_{\mfp}$
for the completion of $\calO$ at $\mfp$ and $q=|\calO_{\mfp}/\mfp|$
for the residue cardinality. Let $\zeta_K(s)$ denote the Dedekind zeta
function of $K$.  Our main results are the following.

\begin{thm}\label{thm:m1n1}
For every non-zero prime ideal $\mfp$ of $\calO$, writing $t=q^{-s}$,
  $$
    \zeta_{\group_{1,1}(\calO_\mfp)}(s)= \frac{(1-t)(1-q^{2}t)}{
      (1-q^{3}t)(1-q^{4}t)}.
  $$
\end{thm}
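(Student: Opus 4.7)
The plan is to invoke \cite[Corollary~2.11]{StasinskiVoll-RepsTgps}, which expresses $\zeta_{\group_{1,1}(\calO_\mfp)}(s)$ as a $\mathfrak{p}$-adic integral attached to the antisymmetric commutator matrix $\calR(\bfY)$. For $m=n=1$ this is the $4 \times 4$ matrix
\[
\calR(\bfy) = \begin{pmatrix} 0 & y & y_{11} & y_{12} \\ -y & 0 & y_{21} & y_{22} \\ -y_{11} & -y_{21} & 0 & 0 \\ -y_{12} & -y_{22} & 0 & 0 \end{pmatrix},
\]
whose Pfaffian is $\pm\det((y_{ij}))$ by Lemma~\ref{lem:Pfaffian}. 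Over any field its rank is therefore $0$, $2$, or $4$, according to whether $\bfy = 0$, $\bfy \neq 0$ with $\det((y_{ij})) = 0$, or $\det((y_{ij})) \neq 0$.

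First, I would make \cite[Corollary~2.11]{StasinskiVoll-RepsTgps} explicit in this case, writing $\zeta_{\group_{1,1}(\calO_\mfp)}(s)$ as a $\mathfrak{p}$-adic integral over $\calO_\mfp^{5}$ (the five central-variable coordinates) whose integrand is, on each rank stratum of the reduction $\overline{\calR}(\overline{\bfy})$, a definite power of $t = q^{-s}$ coming from the Kirillov-orbit dimension formula. Second, I would stratify the domain of integration according to the three rank loci above, and on the rank-$0$ locus $\bfy \in \mfp^{5}$ use the substitution $\bfy = \pi \bfy'$ to extract a self-similar tail and hence a geometric series.

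Third, I would evaluate the contributions. The full-rank stratum reduces to the point count $|\GL_{2}(\calO_\mfp/\mfp)|/q^{4} = (1-q^{-1})(1-q^{-2})$ on the $y_{ij}$-variables, multiplied by a free factor coming from $y \in \calO_\mfp$. The intermediate rank-$2$ stratum, further stratified by $\val(\det((y_{ij})))$, leads to the classical Igusa local zeta function of the $2 \times 2$ determinant $\det((Y_{ij}))$---the standard prehomogeneous vector space of $2 \times 2$ matrices---again multiplied by a free contribution of $y$. Assembling the three geometric series and simplifying should yield the claimed rational function $(1-t)(1-q^{2}t)/((1-q^{3}t)(1-q^{4}t))$.

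The principal obstacle lies in the rank-$2$ stratum. Since the Pfaffian is independent of $y$, this variable is unrestricted by the singularity condition, whereas the $y_{ij}$ lie on the determinantal hypersurface $\{\det((y_{ij})) = 0\}$. Correctly tracking the interplay between these free and constrained contributions across all valuations, and combining the result with the geometric tail from the rank-$0$ locus, is what produces the two distinct denominator factors $(1-q^{3}t)$ and $(1-q^{4}t)$, with the numerator $(1-t)(1-q^{2}t)$ arising as a compensating boundary correction.
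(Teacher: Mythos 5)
Your outline shares the right skeleton with the paper's argument---invoke \cite[Corollary~2.11]{StasinskiVoll-RepsTgps}, stratify the domain by congruence/rank conditions on the reduction mod $\mfp$, extract a self-similar recursion from the scaling $\bfy_0 = \varpi\bfy_0'$ at the origin, and sum geometric series. But the description of the integral you start from is off in ways that would derail the computation.

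The integral the Corollary actually gives is
\[
\calZ_{\lri}(\rho,\tau)=\int_{\mfp\times W_{5}(\lri)}|x|^{\tau}\,\|f(\bfy_0),x\|^{2\rho}\,d\mu(x,\bfy),
\]
with $\zeta_{\group_{1,1}(\lri)}(s)=1+(1-q^{-1})^{-1}\calZ_{\lri}(-s/2,\,2s-6)$. The domain is six-dimensional, $\mfp\times W_5(\lri)$ with $W_5(\lri)=\lri^5\setminus\mfp^{(5)}$, not $\lri^5$; the extra variable $x$ is the \emph{level} variable, and it appears in the integrand through the genuine two-variable expression $\|f(\bfy_0),x\|^{2\rho}$. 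Consequently the integrand is \emph{not} a ``definite power of $t$'' on each rank stratum of $\bar{\calR}$: on the rank-$2$ stratum it still varies with $v(f(\bfy_0))$ and $v(x)$, and the $\max$ coupling between them is precisely what makes the computation nontrivial. Your plan to reduce the rank-$2$ piece to the classical one-variable Igusa zeta function of $\det(Y_{ij})$ therefore does not go through directly; what the paper actually does there is replace $f$ by a coordinate function near a smooth point of $V(f)$ (via \cite[Section~6.1]{AKOVI/13}) and then evaluate the two-variable integral of Lemma~\ref{lem:Integral-xy}.

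There is a second mismatch: because $y$ does not appear in the integrand, but enters only via the constraint $\bfy\in W_5(\lri)$, the useful first decomposition is $W_5(\lri)=D_1\dotcup D_2$ with $D_1=\lri^\times\times\lri^4$ and $D_2=\mfp\times W_4(\lri)$. This is a decomposition by whether $y$ is a unit, not by rank of $\calR$ over $\F_q$ (which also fails to see the boundary of $W_5$, since $W_5$ excludes the rank-$0$ locus over $\F_q$ altogether). The recursion in the paper happens \emph{inside} $D_1$, at the residue class $\bar\aalpha=0$ of $\bfy_0$, and $D_2$ is then re-expressed in terms of $I_{D_1}$; your plan does not account for this $D_2$ piece. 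Finally, the full-rank stratum does not contribute a bare point count $|\GL_2(\F_q)|/q^4$ ``times a free factor from $y$''; it contributes that multiplicity times $\int_{\mfp}|x|^\tau d\mu(x)$, since the $x$-integral is nontrivial even when $|f(\bfy_0)|=1$. In short: right strategy, but the missing level variable $x$, the coupled norm $\|f,x\|^{2\rho}$, and the $D_1/D_2$ split are essential ingredients that your outline would need to incorporate before the ``assembling should yield the claimed rational function'' step can be made to work.
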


\begin{corollary}\label{cor:m1n1}
  The zeta function
  $$\zeta_{\group_{1,1}(\calO)}(s)=
  \frac{\zeta_K(s-3)\zeta_K(s-4)}{\zeta_K(s)\zeta_K(s-2)}$$ has
  abscissa of convergence $5$ and meromorphic continuation to the
  whole complex plane.
\end{corollary}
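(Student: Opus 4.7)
The plan is to derive the stated global formula directly from the Euler product~\eqref{equ:fine.euler} combined with Theorem~\ref{thm:m1n1}, and then extract the abscissa of convergence from the resulting expression by a standard Landau-type argument.

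First, I would multiply the local formulae of Theorem~\ref{thm:m1n1} over all non-zero prime ideals $\mfp$ of $\calO$. Using the Euler expansion $\zeta_K(s) = \prod_\mfp (1-q^{-s})^{-1}$ together with the substitutions $q^{a}t = q^{-(s-a)}$ for $a\in\{0,2,3,4\}$, each of the four binomial factors in the local zeta function is recognised as the local Euler factor of $\zeta_K(s-a)^{\pm 1}$. Taking the product over $\mfp$ then formally yields
\[
\zeta_{\group_{1,1}(\calO)}(s) = \frac{\zeta_K(s-3)\zeta_K(s-4)}{\zeta_K(s)\zeta_K(s-2)}.
\]

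Next, I would verify absolute convergence of this Euler product in a right half-plane. A short logarithmic expansion shows that the logarithm of the local factor at $\mfp$ is $q^{4-s}(1+O(q^{-1}))$ uniformly in $\mfp$ as $\real(s)\to\infty$. Since $\sum_\mfp q^{-r}$ converges if and only if $r>1$, absolute convergence holds on the half-plane $\real(s)>5$, where the identity above is literal. Meromorphic continuation of $\zeta_{\group_{1,1}(\calO)}(s)$ to all of $\C$ then follows immediately from the classical meromorphic continuation of $\zeta_K$.

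Finally, to pin down the abscissa of convergence, I would invoke Landau's theorem: since $\zeta_{\group_{1,1}(\calO)}(s)$ has non-negative Dirichlet coefficients, its abscissa of convergence equals the largest real pole of its meromorphic continuation. The numerator contributes simple poles at $s=4$ and $s=5$; at both of these points the denominator $\zeta_K(s)\zeta_K(s-2)$ is finite and non-vanishing, since $\zeta_K$ is holomorphic and zero-free on $\real(s)>1$. Hence $s=5$ is a genuine simple pole of the quotient and no real pole of larger real part can occur. The only (mild) obstacle in this plan is this pole-cancellation bookkeeping, which is straightforward because all relevant evaluations of $\zeta_K$ lie in its region of absolute convergence.
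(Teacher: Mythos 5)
Your proposal is correct and is, in substance, the argument the paper leaves implicit: combine Theorem~\ref{thm:m1n1} with the Euler product~\eqref{equ:fine.euler}, match each binomial factor $(1-q^{a}t)^{\pm1}$ with the local Euler factor of $\zeta_K(s-a)^{\mp1}$, and read off the analytic properties from the known ones of the Dedekind zeta function. Two small remarks on the bookkeeping at the end: (i) the appeal to Landau is slightly stronger than needed, since absolute convergence of the Euler product for $\real(s)>5$ already gives $\sigma_0\le 5$, and the pole of $\zeta_K(s-4)$ at $s=5$ (where all the other $\zeta_K$-values lie in the zero-free, pole-free region $\real>1$) forces $\sigma_0\ge 5$ directly; and (ii) the statement that the numerator ``contributes simple poles at $s=4$ and $s=5$'' to the quotient is not quite right in general at $s=4$, because $\zeta_K(s-4)=\zeta_K(0)$ vanishes whenever $r_1+r_2>1$, so the putative pole at $s=4$ may cancel --- but this is immaterial, since only the pole at $s=5$ is needed to pin down the abscissa.
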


\begin{thm}\label{thm:m2n1}
For every non-zero prime ideal $\mfp$ of $\calO$, writing $t=q^{-s}$,
$$
\zeta_{\group_{2,1}(\calO_\mfp)}(s) =
\frac{(1-t)(1+q^{3}t)(1-q^{2}t)(q^{8}t^{3}-q^{7}t^{2}+q^{6}t^{2}-q^{5}t^{2}-q^{3}t+q^{2}t-qt+1)}{(1-q^{5}t)(1-q^{5}t^{2})(1-q^{4}t)(1-q^8t^2)}.
$$
\end{thm}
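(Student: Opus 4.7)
The plan is to apply the general $\mfp$-adic integral framework of \cite[Corollary~2.11]{StasinskiVoll-RepsTgps}, which expresses $\zeta_{\group_{2,1}(\lri)}(s)$ as a $\mfp$-adic integral whose integrand records the rank, or more precisely the elementary divisor type, of the commutator matrix $\calR(\bfy)$ as $\bfy$ varies over the centre. For $m=2,n=1$ the matrix is the $6\times 6$ antisymmetric matrix
$$\calR(\bfy)=\begin{pmatrix} yJ_2 & (y_{ij}) \\ -\transpose{(y_{ij})} & 0 \end{pmatrix}$$
of linear forms in $\bfy = (y,(y_{ij}))$, with Pfaffian $y\cdot f(\bfy_0)$, where $f(\bfy_0) = y_{11}y_{22}-y_{12}y_{21}+y_{31}y_{42}-y_{32}y_{41}$ is the quadratic relative invariant of the ambient PVS. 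Since $\calR(\bfy)$ is antisymmetric, its ranks over the residue field of $\lri$ can only take the even values $0,2,4,6$, each cut out by the vanishing of specific Pfaffians of principal submatrices.

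Next, I would stratify the integration domain $\lri^9$ by the joint valuations of $y$, of the invariant factors of $(y_{ij})$, and of $f(\bfy_0)$. On each stratum the local integral is computable: where $(y_{ij})$ has full rank modulo~$\mfp$, one exploits the $\Sp_4\times\GL_2$-action on $4\times 2$ matrices to bring $(y_{ij})$ into a standard form, which reduces the sub-integral over $(y_{ij})$ to an elementary one; the remaining integral in $y$ then contributes a geometric series. On strata where $(y_{ij})$ drops rank, one replaces $(y_{ij})$ by its generic local Smith normal form and performs an analogous reduction, while the contribution of the hypersurface $\{f=0\}$ is accessible via the classical formula for the Igusa local zeta function of a split quadratic form of full rank in~$8$ variables.

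The stratum contributions, assembled as a finite sum of rational functions in $q$ and $t=q^{-s}$, would be simplified (with the aid of computer algebra, if needed) to match the stated formula. The main obstacle, in contrast with Theorem~\ref{thm:m1n1}, is the genuine interplay between the diagonal block $yJ_2$ and the off-diagonal block $(y_{ij})$ that arises precisely because $m>n$: the rank of $\calR(\bfy)$ drops on the locus $y\equiv 0$ in a way controlled by the rank of $(y_{ij})$ modulo~$\mfp$, so the stratification involves subtle mixed conditions on $y$ and on minors of $(y_{ij})$ of various sizes, rather than the cleaner rank conditions available when $m=n$ and $\calR$ has the block form \eqref{equ:comm.matrix.G2}. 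Tracking these conditions and their contributions is the technical heart of the argument and explains the comparative intricacy of the numerator polynomial in the statement relative to that of Theorem~\ref{thm:m1n1}.
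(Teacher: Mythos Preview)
Your outline starts from the same place as the paper---\cite[Corollary~2.11]{StasinskiVoll-RepsTgps}---and correctly identifies that the heart of the matter is the interplay between $y$ and the rank of $(y_{ij})$ when $m>n$. However, what you have written is a plan, not a proof: none of the sub-integrals is actually computed, and the phrase ``assembled \ldots\ to match the stated formula'' hides the entire calculation.

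Where the approaches diverge is in the stratification. You propose to stratify by the invariant factors of $(y_{ij})$ via Smith normal form and the $\Sp_4\times\GL_2$-action. The paper instead first splits $W_9(\lri)$ according to whether $y\in\lri^\times$ or $y\in\mfp$, and then stratifies $\lri^8$ by the determinantal varieties $V_3=\{h=0\}$ and $V_2=\{m_{rs}=0,\ 1\le r<s\le 4\}$ (the rank-$\le 1$ locus). On the smooth strata $V_3\setminus\{0\}$ and $V_2\setminus\{0\}$ it passes to local coordinates in which $V_3$ and $V_2$ are coordinate subspaces of the correct codimension; the resulting integral over $\mfp^{(5)}$ in variables $(x,y,\tilde y_1,\tilde y_2,\tilde y_3)$ is then broken into seven pieces $\Omega_1,\dots,\Omega_7$ according to which coordinate realises the maximal absolute value, and each piece is summed explicitly via Lemma~\ref{lem:Identities}.

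Your Smith-normal-form route may well work, but be aware that the integrand in \cite[Corollary~2.11]{StasinskiVoll-RepsTgps} involves the full sets $F_j(\bfy)$ of principal minors, not just the invariant factors of $(y_{ij})$; in particular $F_2(\bfy)$ contains both the $2\times 2$ minors $m_{rs}(\bfy_0)$ and the monomials $y^2 y_{ij}^2$, so the stratification you need is not purely the elementary divisor type of $(y_{ij})$ but the joint behaviour of $y$, the $m_{rs}$, and $h$. The paper's choice of $V_2,V_3$ together with local coordinates is designed exactly to capture this. If you pursue your approach you will need to make this interplay equally explicit before any integral can be evaluated.
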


\begin{corollary}\label{cor:m2n1}
  The zeta function $\zeta_{\group_{2,1}(\calO)}(s)$ has abscissa of
  convergence $6$ and meromorphic continuation to $\{ s \in \mathbb{C}
  \mid \mathrm{Re}(s) > 7/2\}$.
\end{corollary}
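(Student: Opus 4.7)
The plan is to derive Corollary~\ref{cor:m2n1} from Theorem~\ref{thm:m2n1} together with the Euler product~\eqref{equ:fine.euler}. Using the identity $1 + q^{3}t = (1 - q^{6}t^{2})/(1 - q^{3}t)$ with $t = q^{-s}$, I first rewrite the only linear factor in the local Euler factor that is not of the form $(1 - q^{a - bs})^{\pm 1}$. Collecting local factors into Dedekind zeta functions via $\prod_\mfp (1 - q^{a-bs}) = \zeta_K(bs-a)^{-1}$ then yields
\begin{equation*}
\zeta_{\group_{2,1}(\calO)}(s) = \frac{\zeta_K(s-3)\zeta_K(s-4)\zeta_K(s-5)\zeta_K(2s-5)\zeta_K(2s-8)}{\zeta_K(s)\zeta_K(s-2)\zeta_K(2s-6)} V(s),
\end{equation*}
where $V(s) = \prod_\mfp W(q_\mfp, q_\mfp^{-s})$ and $W(q,t)$ denotes the cubic numerator polynomial in $t$ from Theorem~\ref{thm:m2n1}.

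For the abscissa of convergence, observe that the coefficients of $t, t^{2}, t^{3}$ in $W(q,t)-1$ have $q$-degrees $3, 7, 8$, respectively, so $|W(q, q^{-s}) - 1| = O(q^{3 - \mathrm{Re}(s)})$ and $V(s)$ is absolutely convergent and non-vanishing on $\{\mathrm{Re}(s) > 4\}$. The rightmost singularity among the Dedekind factors is the simple pole of $\zeta_K(s-5)$ at $s = 6$; all other Dedekind factors are finite and non-zero there, and $V(6)\neq 0$ by a direct estimate from the explicit form of $W$. This pins the abscissa at exactly $6$.

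For the meromorphic continuation to $\{\mathrm{Re}(s) > 7/2\}$, I would continue $V(s)$ by extracting further Dedekind factors. A Newton-polygon analysis of $W(q,t)$ singles out $\zeta_K(2s-7)$ as the critical extraction: multiplying $V(s)$ by $\prod_\mfp (1 - q^{7-2s})$, together with finitely many further correction factors $\zeta_K(b_{j}s - a_{j})^{\eta_{j}}$ chosen to cancel the leading $q$-degree term in each $t^{k}$-coefficient of the resulting series expansion, one obtains a residual Euler product absolutely convergent on $\{\mathrm{Re}(s) > 7/2\}$. Since each extracted Dedekind zeta factor continues meromorphically to all of $\C$, so does $\zeta_{\group_{2,1}(\calO)}(s)$.

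The hard part will be this last step. The threshold $7/2$ is dictated by the $q$-degree $7$ of the coefficient of $t^{2}$ in $W$, and the delicate task is to identify the correct finite list of Dedekind factors to extract and then to control, via degree bounds coming from the Newton polygon of $W$, the infinitely many higher-order $t^{k}$-contributions in the series expansion of $W(q,t)$ after division by these extracted factors.
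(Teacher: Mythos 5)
Your derivation of the Euler-product factorisation via $1+q^3t = (1-q^6t^2)/(1-q^3t)$, the resulting expression in Dedekind zeta functions, and the identification of the abscissa as $6$ (the simple pole of $\zeta_K(s-5)$, with $V(s)$ holomorphic and non-vanishing near $s=6$ because the coefficients of $t,t^2,t^3$ in $W(q,t)-1$ have $q$-degrees $3,7,8$) are all correct and essentially identical to what the paper does implicitly.

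For the meromorphic continuation, however, your proposal is only a sketch, and you yourself flag the crucial step as ``the hard part.'' The paper sidesteps the issue entirely: it cites \cite[Lemma~5.5]{duSWoodward/08}, which directly gives meromorphic continuation of the Euler product $\prod_\mfp W(q_\mfp,q_\mfp^{-s})$ to $\mathrm{Re}(s)>\beta$, where $\beta$ is the maximum of $a/b$ over the nonzero monomials $q^a t^b$ ($b\geq 1$) in $W$; the paper then just computes $\beta=\max\{8/3,\,7/2,\,3/1\}=7/2$. Your Newton-polygon heuristic correctly identifies $q^7t^2$ (ratio $7/2$) as the governing term, so the threshold you find agrees. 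But your proposed mechanism --- extracting $\zeta_K(2s-7)$ plus ``finitely many further correction factors'' to make the residual Euler product converge on $\mathrm{Re}(s)>7/2$ --- is not substantiated and is, in general, false as stated: $W$ does not factor over $\Z[q]$ into factors $(1-q^a t^b)^{\pm1}$, and dividing by a fixed finite collection of such factors produces a power series in $t$ whose $t^k$-coefficients have leading $q$-degree approaching $(7/2)k$ from below, so that the residual Euler product will not converge on all of $\mathrm{Re}(s)>7/2$ after any \emph{finite} extraction. The du~Sautoy--Woodward lemma (and the related arguments of du~Sautoy--Grunewald) gets around this by controlling a convergent \emph{infinite} product of translated Dedekind zeta factors, and it is this technical input that your sketch is missing. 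So: right threshold, right general strategy, but a genuine gap in the key step --- one that the paper fills simply by citation.
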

Our earlier result \cite[Theorem~B]{StasinskiVoll-RepsTgps} comprises
an explicit computation of the representation zeta functions of
infinite families of groups. Its proof is based on a recursive
procedure, exploiting the genericity of various matrices of linear
forms encoding presentations of the relevant Lie lattices. In the
present case, the matrices $\calR(\bfY)$ do not seem to lend
themselves to a similar recursive analysis. This is why we resort to
an explicit analysis of $\mfp$-adic integrals associated to the
relevant rank varieties.

We remark that the uniformity of the analytic invariants determined in
Corollaries~\ref{cor:m1n1} and \ref{cor:m2n1}, viz.\ their
independence of $\calO$, is a general feature of representation zeta
functions of $\T$-groups obtained from unipotent group schemes; cf.\
\cite{DuongVoll/15} for details.

\subsection{Topological representation zeta functions}
In \cite{Rossmann/15}, Rossmann introduced the topological
representation zeta function $\zeta_{\bfG, \textup{top}}(s)$
associated to a unipotent group~$\bfG$ defined over a number
field. This is a rational function in a parameter $s$ which, in a
certain precise sense, captures the behaviour of the local
representation zeta functions $\zeta_{\bfG(\Gri_{\mfp})}(s)$
associated to $\bfG$ in the limit as `$q\rightarrow 1$';
cf.\ \cite[Definition~3.5]{Rossmann/15}. Informally, the topological
representation zeta function of $\bfG$ is the leading term of the
expansion in $q-1$ of the local representation zeta
functions~$\zeta_{\bfG(\Gri_{\mfp})}(s)$. Given the explicit formulae
in Theorems~\ref{thm:m1n1} and~\ref{thm:m2n1} it follows easily that
\begin{align*}
  \zeta_{Q_{1,1,\textrm{top}}}(s) = \frac{s(s-2)}{(s-3)(s-4)}\quad
  \textrm{ and } \quad \zeta_{Q_{2,1,\textrm{top}}}(s) = \frac{2 s
    (s-2) (s^2 - 5s + 5)}{(s-5)(2s-5)(s-4)^2};
\end{align*}
cf.\ \cite[Section~4]{Rossmann_stab/15}. Simple computations show that
Questions 7.1, 7.2, 7.4, and 7.5 raised in
\cite[Section~7]{Rossmann/15} have positive answers in the cases under
consideration.

\subsection{A potential connection with Coxeter group
  statistics}\label{subsec:coxeter}

A general theme in
\cite{StasinskiVoll-RepsTgps} is the description of local factors of
representation zeta functions of certain $\T$-groups in terms of
statistics on Coxeter groups of type $B$ generalizing the classical
Coxeter length function. Whenever available, such descriptions afford
much more concise and explicit formulae than those given in general by
$\mfp$-adic integrals. They also allow for direct proofs of certain
local functional equations;
cf.\ \cite[Theorem~A]{StasinskiVoll-RepsTgps}. Indeed, whilst in
general these symmetries are consequences of the Weil conjectures for
smooth projective algebraic varieties over finite fields, in the
presence of Coxeter group theoretic interpretations they are often
easily deduced from comparatively elementary symmetry features of
Coxeter groups.

We record here an observation that might help explain the
``exceptional'' factor in the numerator in Theorem~\ref{thm:m2n1} in
such Coxeter group theoretic terms.  We write $B_m$ for the Coxeter
group of signed $m \times m$-permutation matrices, $\ell(w)$ for the
Coxeter length of an element $w\in B_m$, and $D(w)$ for its (right)
descent set; cf.\ \cite[Section~8.1]{BjoernerBrenti/05} for
details. Set $Z_0 = q^4t$, $Z_1 = q^8t^2$. Then, in $\Z[q,t]$,
\begin{equation*}
  \sum_{w\in B_2} (-q^{-1})^{\ell(w)}\prod_{i\in D(w)}Z_i = 
  q^{8}t^{3}-q^{7}t^{2}+q^{6}t^{2}-q^{5}t^{2}-q^{3}t+q^{2}t-qt+1.
\end{equation*}
By \cite[Lemma~4.4]{StasinskiVoll-RepsTgps}, this means that, in
$\Q(q,t)$,
\begin{equation}\label{equ:cox1}
 \frac{q^{8}t^{3}-q^{7}t^{2}+q^{6}t^{2}-q^{5}t^{2}-q^{3}t+q^{2}t-qt+1}{(1-q^4t)(1-q^8t^2)}
 = \sum_{I \subseteq \{0,1\}} f_{2,I}(-q^{-1}) \prod_{i\in D(w)}
 \frac{Z_i}{1-Z_i},
\end{equation}
where $f_{n,I}(X) = \sum_{w\in B_n,\, D(w) \subseteq I}
X^{\ell(w)}\in\Z[X]$ for $n\in\N$ and
$I\subseteq\{0,1,\dots,n-1\}$. Specifically,
\begin{align*}
 f_{2,\varnothing} (X) & = 1,\\ f_{2,{\{0\}}}(X) = f_{2,{\{1\}}}(X) &= 1 + X + X^2
 + X^3,\\ f_{2,{\{0,1\}}}(X) &= 1 + 2X + 2X^2 + 2X^3 + X^4 \quad
 (\textrm{the Poincar\'e polynomial of $B_2$}).
\end{align*}
The contribution to the topological representation zeta function
$\zeta_{Q_{2,1,\textup{top}}}(s)$ from the factor \eqref{equ:cox1} is
$\frac{2 s (s-2) (s^2 - 5s + 5)}{(s-4)^2}$.

For $(m,n)=(1,1)$, there is a similar interpretation of a factor in
Theorem~\ref{thm:m1n1} in terms of $B_1(\cong C_2)$:
\begin{equation}\label{equ:cox2}
 \frac{1-q^2t}{1-q^3t} = 1 + (-q^{-1}+1)\frac{q^3t}{1-q^3t} =
 \sum_{I\subseteq\{0\}} f_{1,I}(-q^{-1})\frac{q^3t}{1-q^3t}.
\end{equation}
The contribution to the topological representation zeta function
$\zeta_{Q_{2,1,\textup{top}}}(s)$ from the factor \eqref{equ:cox2} is
$\frac{s-2}{s-3}$.

 Note that the
identities \eqref{equ:cox1} and \eqref{equ:cox2} are interpretations
of our explicit formulae for the respective local zeta functions. Any
\emph{a priori} proofs for these formulae seem likely to yield proofs
of Theorems~\ref{thm:m1n1} and \ref{thm:m2n1} which are much less
arduous than the $\mfp$-adic computations that make up the bulk of the
current paper. Whether analogous identities help to explain the
remaining factors and the (presumably involved) formulae for the zeta
functions $\zeta_{\group_{m,n}(\lri)}(s)$ for further values of $m$
and $n$ is therefore an interesting open question.

\subsection{Organisation and notation}\label{subsec:Notation}

Section~\ref{sec:lemmas} comprises a number of elementary preliminary
lemmas. In Section~\ref{sec:m1-n1} we compute
$\zeta_{\group_{m,n}(\lri)}(s)$ in the case $(m,n)=(1,1)$. In
Section~\ref{sec:m2-n1} we do the same for $(m,n)=(2,1)$, this case
being substantially more involved.

We fix some additional notation. Let $\N$ denote the set of positive
integers and $\N_{0}=\N\cup\{0\}$.  Let $v$ be the valuation on $\lri$
and for $x\in\lri$, let $\vert x\vert=q^{-v(x)}$ denote the
$\mfp$-adic absolute value.  For $d\in\N$ we write $\mfp^{d}$ for the
$d$th power of the ideal $\mfp$ and $\mfp^{(d)}$ for the $d$-fold
Cartesian product of $\mfp$, considered as a subset of the $d$-fold
product $\lri^{(d)}=\lri^{d}$.  We write $\lri^\times$ for the group
of units of $\lri$ and $\varpi$ for a fixed uniformiser of~$\lri$. We
set $W_{d}(\lri)=\lri^{d}\setminus\mfp^{(d)}$ and $A_d$ for a fixed
set of representatives of the residue classes under the reduction mod
$\mfp$ map $W_d(\lri)\rightarrow \F_q^d\setminus\{0\}$. Moreover, let
$A_d^0=A_d\cup\{0\}$.

We use boldface letters, such as $\bfY$ and $\bfy$, for vectors
of variables.  For a finite set $H$ of polynomial functions
corresponding to polynomials in $\lri[\bfY]$, let
$\|H(\bfy)\|:\lri^d\rightarrow\R$ denote the function defined by $
\bfa\mapsto\max\{\lvert f(\bfa)\rvert\mid f\in H\}.  $ If
$H=\{f_1(\bfY),\dots,f_n(\bfY)\}$ we will often write
$\|f_1(\bfy),\dots,f_n(\bfy)\|$ for $\|H(\bfy)\|$.

\subsection{Acknowledgements}
Voll acknowledges support by the DFG through Sonderforschungsbereich
701 at Bielefeld University and helpful conversations with Jan
Schepers. This research was supported by EPSRC grant EP/F044194/1. The
remarks of two anonymous referees helped to improve this paper's
exposition.

\section{Auxiliary lemmas}\label{sec:lemmas}
\begin{lem}\label{lem:Pfaffian}
  Let $m\geq n\geq 1$ and $\calR(\bfY)\in \Mat_{2(m+n)}(\Z[\bfY])$ as
  in \eqref{equ:comm.matrix}.
  Then $$\Pfaff(\calR(\bfY))=Y^{m-n}\Pfaff(\transpose{(Y_{ij})}J_{m}(Y_{ij})).$$
\end{lem}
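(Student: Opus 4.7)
My plan is to apply the standard Schur-complement-style reduction for Pfaffians of block antisymmetric matrices. Write
\[
\calR(\bfY) = \begin{pmatrix} A & B \\ -\transpose{B} & 0 \end{pmatrix}, \qquad A = YJ_m, \quad B = (Y_{ij}),
\]
and work over the fraction field $\Q(\bfY)$, in which $A$ is invertible. Conjugating $\calR$ by the determinant-one matrix $P = \bigl(\begin{smallmatrix} I & 0 \\ \transpose{B}\,A^{-1} & I \end{smallmatrix}\bigr)$ produces the block-diagonal antisymmetric matrix
\[
P\,\calR\,\transpose{P} \;=\; \begin{pmatrix} A & 0 \\ 0 & \transpose{B}\,A^{-1} B \end{pmatrix},
\]
the lower-right block being antisymmetric because $\transpose{A} = -A$. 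Since the Pfaffian is invariant under conjugation by determinant-one matrices and multiplicative on block-diagonal antisymmetric matrices, this reduces the problem to
\[
\Pfaff(\calR) \;=\; \Pfaff(A)\,\Pfaff(\transpose{B}\,A^{-1} B).
\]

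Now I would evaluate each factor using the scaling property $\Pfaff(\lambda M) = \lambda^k \Pfaff(M)$ for a $2k \times 2k$ antisymmetric matrix $M$. The first factor is $\Pfaff(A) = \Pfaff(YJ_m) = Y^m \Pfaff(J_m) = Y^m$, where $\Pfaff(J_m) = 1$ is immediate from the block structure of $J_m$. For the second, the identity $J_m^2 = -I_{2m}$ yields $J_m^{-1} = -J_m$, so $A^{-1} = -Y^{-1} J_m$ and $\transpose{B}\,A^{-1} B = -Y^{-1}\,\transpose{B} J_m B$; scaling once more gives
\[
\Pfaff(\transpose{B}\,A^{-1} B) \;=\; (-Y^{-1})^n \Pfaff(\transpose{B} J_m B).
\]
Multiplying yields $\Pfaff(\calR) = (-1)^n Y^{m-n} \Pfaff(\transpose{B} J_m B)$, which is the claimed identity up to the sign $(-1)^n$; this sign is innocuous since the relative invariant $f$ of a PVS is determined only up to a unit scalar, and enters the later $\mfp$-adic computations only through $\vert f\vert$.

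The one genuine subtlety is that $A^{-1}$ is not polynomial in $\bfY$, so the block-diagonalisation takes place in $\Q(\bfY)$ rather than in $\Z[\bfY]$. This is not a real obstacle: both sides of the claimed identity are manifestly polynomials, and an equality of rational functions whose two members are polynomials is an equality in $\Z[\bfY]$. I expect this to be the only point requiring any comment in the write-up; if a purely polynomial argument is preferred, one can instead expand $\Pfaff(\calR)$ combinatorially as a signed sum over perfect matchings of $\{1,\dots,2(m+n)\}$, observe that the zero lower-right block forces each of the $2n$ right-hand indices to be matched with a left-hand one, note that the structure of $YJ_m$ forces the remaining $m-n$ left-left pairs to each be of the form $\{2i-1,2i\}$ and contribute a factor $Y$, and finally reorganise the sum over cross-pair bijections into the Pfaffian expansion of $\transpose{B} J_m B$.
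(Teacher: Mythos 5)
Your proof is correct and takes a closely related but sharper route than the paper's. Both arguments reduce the problem via the Schur complement of the top-left block $A=YJ_m$, working over $\Q(\bfY)$ where $A$ is invertible; the paper, however, computes $\det(\calR(\bfY))$ using the block-determinant formula $\det\bigl(\begin{smallmatrix}A&B\\ C&D\end{smallmatrix}\bigr)=\det(A)\det(D-CA^{-1}B)$ and then concludes ``whence the expression for the Pfaffian'', which only determines $\Pfaff(\calR(\bfY))$ up to sign. You instead carry out the block-diagonalisation at the level of Pfaffians themselves, via congruence by the determinant-one matrix $P$ together with multiplicativity of the Pfaffian on block-diagonal antisymmetric matrices, and this pins the sign down. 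Indeed you obtain $\Pfaff(\calR(\bfY))=(-1)^nY^{m-n}\Pfaff(\transpose{(Y_{ij})}J_m(Y_{ij}))$, which shows that the lemma as stated is off by the factor $(-1)^n$ under the standard normalisation $\Pfaff(J_m)=1$: for instance, when $m=n=1$ a direct expansion of the $4\times4$ Pfaffian gives $\Pfaff(\calR(\bfY))=\calR_{12}\calR_{34}-\calR_{13}\calR_{24}+\calR_{14}\calR_{23}=-(Y_{11}Y_{22}-Y_{12}Y_{21})$, not $+(Y_{11}Y_{22}-Y_{12}Y_{21})$. You are right that this sign is harmless for the paper's purposes, since only $\lvert f\rvert$ enters the subsequent $\mfp$-adic computations; still, your congruence argument is the more precise one, and a careful write-up would either state the lemma with the sign $(-1)^n$ or note explicitly that the sign is immaterial. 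The closing remark about checking the polynomial identity in $\Q(\bfY)$ suffices because both sides are polynomials is correct and, as you say, exactly the point the paper itself implicitly relies on when inverting $YJ_m$ over $\Z[\bfY,Y^{-1}]$.
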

\begin{proof}
  Consider $\calR(\bfY)$ as a matrix over the bigger ring
  $\Z[\bfY,Y^{-1}]$, so that the block $YJ_m$ is invertible. It is
  well known that if $\begin{pmatrix}A & B\\ C & D\end{pmatrix}$ is a
  block matrix over a commutative ring such that $A$ is invertible,
  then
$$
\det\begin{pmatrix}A & B\\ C & D\end{pmatrix}=\det(A)\det(D-CA^{-1}B).
$$
This implies that 
\begin{multline*}
  \det(\calR(\bfY))=\det(YJ_m)\det(-\transpose{(Y_{ij})}Y^{-1}(-J_m)(Y_{ij}))=\\Y^{2m}\det(Y^{-1}\transpose{(Y_{ij})}J_m(Y_{ij}))=Y^{2m-2n}\det(\transpose{(Y_{ij})}J_m(Y_{ij})),
\end{multline*}
whence the expression for the Pfaffian.
\end{proof}
\begin{lem}
  \textup{\label{lem:Identities} The following identities hold in the
    field $\Q((a,b,c))$:}\end{lem}
\begin{enumerate}
\item \label{enu:Identity-1}$\sum_{\substack{X\in\N}
}Xa^{X}=\frac{a}{(1-a)^{2}},$
\item \label{enu:AKOV-Identity}$\sum_{(X,Y)\in\N^{2}}a^{X}b^{Y}c^{\min\{X,Y\}}=\frac{abc(1-ab)}{(1-abc)(1-a)(1-b)}$,
\item \label{enu:Identity-2}$\sum_{(X,Y,Z)\in\N^{3}}a^{X}b^{Y+Z}c^{\min\{X,Y+Z\}}=\frac{ab^{2}c(1-a+ac-2abc+a^{2}b^{2}c)}{(1-abc)^{2}(1-a)(1-b)^{2}}.$\end{enumerate}
\begin{proof}
For \eqref{enu:Identity-1}, just observe that
\[
\sum_{\substack{X\in\N} }Xa^{X}=a\Big(\sum_{\substack{X\in\N}
}Xa^{X-1}\Big)=a\frac{d}{da}\Big(\sum_{X\in\N}a^{X}\Big)=a\frac{d}{da}\Big(\frac{a}{1-a}\Big)=\frac{a}{(1-a)^{2}}.
\]

For \eqref{enu:AKOV-Identity}, first consider the case $X\leq Y$
and let $Y=X+\tilde{Y}$ with $\tilde{Y}\in\N_{0}$. Then
\[
\sum_{\substack{(X,Y)\in\N^{2}\\
X\leq Y
}
}a^{X}b^{Y}c^{\min\{X,Y\}}=\sum_{(X,\tilde{Y})\in\N\times\N_{0}}a^{X}b^{X+\tilde{Y}}c^{X}=\frac{abc}{1-abc}\cdot\frac{1}{1-b}.
\]
Next, consider the case $X>Y$ and let $X=Y+\tilde{X}$ with
$\tilde{X}\in\N$.  Then
\[
\sum_{\substack{(X,Y)\in\N^{2}\\
X>Y
}
}a^{X}b^{Y}c^{\min\{X,Y\}}=\sum_{(\tilde{X},Y)\in\N^{2}}a^{Y+\tilde{X}}b^{Y}c^{Y}=\frac{abc}{1-abc}\cdot\frac{a}{1-a}.
\]
Thus 
\begin{align*}
\sum_{(X,Y)\in\N^{2}}a^{X}b^{Y}c^{\min\{X,Y\}} &
=\frac{abc}{1-abc}\left(\frac{1}{1-b}+\frac{a}{1-a}\right)=\frac{abc(1-ab)}{(1-abc)(1-a)(1-b)}.
\end{align*}

To prove \eqref{enu:Identity-2} 
we make the change of variables $Y'=Y+Z$. Then
\begin{multline}
\sum_{(X,Y,Z)\in\N^{3}}a^{X}b^{Y+Z}c^{\min\{X,Y+Z\}}  =\sum_{(X,Y')\in\N^{2}}(Y'-1)a^{X}b^{Y'}c^{\min\{X,Y'\}}\label{eq:XYZ-sum} \\
= \sum_{(X,Y')\in\N^{2}}Y'a^{X}b^{Y'}c^{\min\{X,Y'\}} - \sum_{(X,Y')\in\N^{2}}a^{X}b^{Y'}c^{\min\{X,Y'\}}.
\end{multline}
Write $\sum_{(X,Y')\in\N^{2}}Y'
a^{X}b^{Y'}c^{\min\{X,Y'\}}=f_{1}+f_{2}$, where
\[
f_{1}:=\sum_{\substack{(X,Y')\in\N^{2}\\
X\leq Y'
}
}Y'a^{X}b^{Y'}c^{\min\{X,Y'\}},\qquad f_{2}:=\sum_{\substack{(X,Y')\in\N^{2}\\
X>Y'
}
}Y'a^{X}b^{Y'}c^{\min\{X,Y'\}}.
\]
Setting $Y'=X+Y''$, for $Y''\in\N_{0}$, we get, by \eqref{enu:Identity-1}, 
\begin{multline*}
  f_{1}  =\sum_{\substack{X\in\N\\ Y''\in\N_{0} }}(X+Y'')
    a^{X}b^{Y''+X}c^{X}=\sum_{\substack{X\in\N\\ Y''\in\N_{0} }}X(abc)^{X}b^{Y''}+\sum_{\substack{X\in\N\\ Y''\in\N_{0} }
    }Y''(abc)^{X}b^{Y''}\\
=\frac{abc}{(1-abc)^{2}}\cdot
\frac{1}{1-b}+\frac{abc}{1-abc}\cdot\frac{b}{(1-b)^{2}}
=\frac{abc(1-ab^{2}c)}{(1-abc)^{2}(1-b)^{2}}.
\end{multline*}
Moreover, setting $X=Y'+X'$, for $X'\in\N$, we obtain,
using~\eqref{enu:Identity-1}, that
\[
f_{2}=\sum_{\substack{(X',Y')\in\N^{2}}
}Y'a^{Y'+X'}b^{Y'}c^{Y'}=\sum_{\substack{(X',Y')\in\N^{2}}
}Y'a^{X'}(abc)^{Y'}=\frac{abc}{(1-abc)^{2}}\frac{a}{1-a}.
\]
We conclude that
\[
\sum_{(X,Y')\in\N^{2}}Y'
a^{X}b^{Y'}c^{\min\{X,Y'\}}=f_{1}+f_{2}=\frac{abc}{(1-abc)^{2}}\left(\frac{1-ab^{2}c}{(1-b)^{2}}+\frac{a}{1-a}\right).
\]
Hence, by \eqref{eq:XYZ-sum} and \eqref{enu:AKOV-Identity}, we obtain
\begin{align*}
  \lefteqn{\sum_{(X,Y,Z)\in\N^{3}}a^{X}b^{Y+Z}c^{\min\{X,Y+Z\}}}
  \\\quad \quad &
  =\frac{abc}{(1-abc)^{2}}\left(\frac{1-ab^{2}c}{(1-b)^{2}}+\frac{a}{1-a}\right)-\frac{abc(1-ab)}{(1-abc)(1-a)(1-b)}\\ &
  =\frac{ab^{2}c(1-a+ac-2abc+a^{2}b^{2}c)}{(1-abc)^{2}(1-a)(1-b)^{2}}.\qedhere
\end{align*}
\end{proof}

Key tools in our computations are $\mfp$-adic integrals associated to
polynomials or polynomial mappings, known as Igusa's local zeta
function; see \cite{Denef/91} for a basic introduction. We will use
the following simple fact throughout: for $s\in\C$ with
$\real(s)>0$,
\begin{equation*}
\int_{\mfp}|x|^{s}d\mu(x)=\frac{(1-q^{-1})q^{-1-s}}{1-q^{-1-s}}.
\end{equation*}
Here -- and, \emph{mutatis mutandis}, in the sequel -- we write $\mu$
for the additive Haar measure on $\lri$ normalised such that
$\mu(\lri)=1$. This implies that $\mu(\mfp)=q^{-1}$. 

The following identities involving $\mfp$-adic integrals and rational
functions are understood as identities of meromorphic functions. The
relevant integrals all converge provided the real parts of their
arguments are sufficiently large.
\begin{lem}
\label{lem:Integral-xy}
\[
\int_{\mfp^{(2)}}|x|^{s}\Vert x,y\Vert^{t}d\mu(x,y) =
\frac{q^{-2-s-t}(1-q^{-2-s})(1-q^{-1})}{(1-q^{-2-s-t})(1-q^{-1-s})}.
\]
\end{lem}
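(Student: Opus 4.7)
The plan is to reduce the integral to a double sum over $\N^2$ which matches the identity of Lemma~\ref{lem:Identities}\eqref{enu:AKOV-Identity}. I first stratify $\mfp^{(2)}$ by the pair of valuations. For $(a,b)\in\N^2$, the set $\{(x,y)\in\lri^2 \mid v(x)=a,\, v(y)=b\}$ has Haar measure $(1-q^{-1})^2 q^{-a-b}$, and on this stratum the integrand equals $q^{-as}\cdot q^{-t\min\{a,b\}}$, since $\|x,y\|=\max\{|x|,|y|\}=q^{-\min\{a,b\}}$. Absolute convergence for $\real(s)$ and $\real(s+t)$ sufficiently large is automatic and will justify the interchange of integration and summation.

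Next, I would collect terms to write
\[
\int_{\mfp^{(2)}}|x|^{s}\Vert x,y\Vert^{t}d\mu(x,y) = (1-q^{-1})^2\sum_{(a,b)\in\N^2} A^a B^b C^{\min\{a,b\}},
\]
where $A=q^{-1-s}$, $B=q^{-1}$, $C=q^{-t}$. Applying Lemma~\ref{lem:Identities}\eqref{enu:AKOV-Identity} with these substitutions then yields
\[
(1-q^{-1})^2\cdot\frac{ABC(1-AB)}{(1-ABC)(1-A)(1-B)} = \frac{q^{-2-s-t}(1-q^{-2-s})(1-q^{-1})}{(1-q^{-2-s-t})(1-q^{-1-s})},
\]
after canceling the factor $1-B=1-q^{-1}$ in the denominator against one factor $(1-q^{-1})$ from the prefactor.

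There is no real obstacle here: the only step requiring care is the bookkeeping of the stratum measures and the identification with the exact form of the sum appearing in Lemma~\ref{lem:Identities}\eqref{enu:AKOV-Identity}. An alternative route, avoiding the preceding lemma, would split the integration region according to whether $v(x)\leq v(y)$ or $v(x)>v(y)$, integrating out the ``inner'' variable first as a geometric series in each case and then combining the two pieces over a common denominator; this yields the same closed form but is slightly more laborious.
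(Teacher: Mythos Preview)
Your proof is correct and follows essentially the same approach as the paper: stratify $\mfp^{(2)}$ by the pair of valuations, identify the resulting double sum with Lemma~\ref{lem:Identities}\eqref{enu:AKOV-Identity} using the substitutions $a=q^{-1-s}$, $b=q^{-1}$, $c=q^{-t}$, and simplify. The alternative case-splitting route you sketch is precisely how the paper proves Lemma~\ref{lem:Identities}\eqref{enu:AKOV-Identity} itself.
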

\begin{proof}
  As, for $X,Y\in\N$,
$$\mu(\{(x,y)\in\mfp^{(2)}\mid v(x)=X,v(y)=Y\})=(1-q^{-1})^{2}q^{-X-Y},$$ we obtain, using Lemma~\ref{lem:Identities}\,\eqref{enu:AKOV-Identity},
\begin{align*}
  \int_{\mfp^{(2)}}|x|^{s}\Vert x,y\Vert^{t}d\mu(x,y) & =\sum_{(X,Y)\in\N^{2}}(1-q^{-1})^{2}q^{-X-Y} q^{-sX-t\min\{X,Y\}}\\
  &
  =(1-q^{-1})^{2}\sum_{(X,Y)\in\N^{2}}q^{-(1+s)X}q^{-Y}q^{-t\min\{X,Y\}}\\
  & =
  (1-q^{-1})^{2}\frac{q^{-1-s}q^{-1}q^{-t}(1-q^{-1-s}q^{-1})}{(1-q^{-1-s}q^{-1}q^{-t})(1-q^{-1-s})(1-q^{-1})},
\end{align*}
and the lemma follows.\end{proof}
\begin{lem}\label{lem:Integral-x-yz}
\begin{multline*}
  \int_{\mfp^{(3)}}|x|^{s}\|x,yz\|^{t}d\mu(x,y,z) =\\
\frac{q^{-3-s-t}(1-q^{-1})(1-q^{-1-s}+q^{-1-s-t}-2q^{-2-s-t}+q^{-4-2s-t})}{(1-q^{-2-s-t})^{2}(1-q^{-1-s})}.
\end{multline*}
\end{lem}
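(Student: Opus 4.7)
The plan is to stratify the domain $\mfp^{(3)}$ by the valuations of the three coordinates and reduce the integral to a triple sum that is exactly the one evaluated in Lemma~\ref{lem:Identities}\,\eqref{enu:Identity-2}.

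First I would set $X=v(x)$, $Y=v(y)$, $Z=v(z)$, each ranging over $\N$, and use that the set $\{(x,y,z)\in\mfp^{(3)}\mid v(x)=X,\, v(y)=Y,\, v(z)=Z\}$ has Haar measure $(1-q^{-1})^{3}q^{-X-Y-Z}$. Since $v(yz)=Y+Z$ and $|x|=q^{-X}$, the integrand becomes $q^{-sX-t\min\{X,Y+Z\}}$, so the integral equals
\[
(1-q^{-1})^{3}\sum_{(X,Y,Z)\in\N^{3}}q^{-(1+s)X}\,q^{-(Y+Z)}\,q^{-t\min\{X,Y+Z\}}.
\]

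Next I would invoke Lemma~\ref{lem:Identities}\,\eqref{enu:Identity-2} with the substitution $a=q^{-1-s}$, $b=q^{-1}$, $c=q^{-t}$. This yields $ab^{2}c=q^{-3-s-t}$, $abc=q^{-2-s-t}$, and $a^{2}b^{2}c=q^{-4-2s-t}$, so the triple sum equals
\[
\frac{q^{-3-s-t}\bigl(1-q^{-1-s}+q^{-1-s-t}-2q^{-2-s-t}+q^{-4-2s-t}\bigr)}{(1-q^{-2-s-t})^{2}(1-q^{-1-s})(1-q^{-1})^{2}}.
\]
Multiplying by the prefactor $(1-q^{-1})^{3}$ gives the claimed formula.

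There is no real obstacle: the entire argument is a direct reduction to the combinatorial identity already established in Lemma~\ref{lem:Identities}, the only subtlety being the bookkeeping of the three factors of $(1-q^{-1})$ arising from restricting each coordinate to $\mfp$. Convergence for $\real(s), \real(s+t), \real(2s+t)$ sufficiently large is automatic, and the identity is then read as one of meromorphic functions.
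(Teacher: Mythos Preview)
Your proof is correct and follows exactly the approach the paper has in mind: stratify $\mfp^{(3)}$ by valuations to convert the integral into the triple sum of Lemma~\ref{lem:Identities}\,\eqref{enu:Identity-2}, then specialise $a=q^{-1-s}$, $b=q^{-1}$, $c=q^{-t}$ and cancel two of the three factors $(1-q^{-1})$. The paper's own proof is just the one-line remark that this is parallel to Lemma~\ref{lem:Integral-xy} using \eqref{enu:Identity-2} in place of \eqref{enu:AKOV-Identity}; you have simply written out the details.
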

\begin{proof}
This is very similar to the proof of Lemma~\ref{lem:Integral-xy}, but
uses Lemma~\ref{lem:Identities}\,\eqref{enu:Identity-2}.
\end{proof}

\section{The zeta function of $Q_{1,1}(\lri)$}\label{sec:m1-n1}

Let $m=n=1$ and recall the notation from
Sections~\ref{sec:Introduction} and \ref{sec:lemmas}. In this case
\calR(\bfY) has Pfaffian 
$$f(\bfY_0):=\Pfaff(\calR(\bfY))=Y_{11}Y_{22}-Y_{12}Y_{21}.$$ 
For $0\leq j\leq2$ let
\[
F_{j}(\bfY)=\{g\mid g=g(\bfY)\text{ a principal
  \ensuremath{2j\times2j}\text{ minor of }\ensuremath{\calR(\bfY)}}\}.
\]
One readily computes that
\begin{align*}
F_{0}(\bfY) &= \{1\},\\
F_{1}(\bfY) & =\{0,Y^{2},Y_{11}^{2},Y_{12}^{2},Y_{21}^{2},Y_{22}^{2}\},\\
F_{2}(\bfY) & =\{\Pfaff(\calR(\bfY))^2\} = \{f(\bfY_0)^{2}\}.
\end{align*}
By \cite[Corollary~2.11]{StasinskiVoll-RepsTgps} we can compute
$\zeta_{\group_{1,1}(\lri)}(s)$ in terms of the $\mfp$-adic integral
\[
\calZ_{\lri}(\rho,\tau):=\int_{\mfp\times
  W_{5}(\lri)}|x|^{\tau}\prod_{j=1}^{2}\frac{\|F_{j}(\bfy)\cup
  F_{j-1}(\bfy)x^{2}\|^{\rho}}{\|F_{j-1}(\bfy)\|^{\rho}}d\mu(x,\mathbf{y}),
\]
where $\rho,\tau\in\C$. More precisely, 
\begin{equation}\label{equ:zeta=integral}
 \zeta_{\group_{1,1}(\lri)}(s)=1+(1-q^{-1})^{-1}\calZ_{\lri}(-s/2,2s-6).
\end{equation}
The integral $\calZ_{\lri}(\rho,\tau)$ is a special case of (2.8) in
\cite[Section~2.2.3]{StasinskiVoll-RepsTgps} for the case $v=0$ and
$u=2$.  Since $\|F_{0}(\bfy)\|=\|F_{1}(\bfy)\|=1$, we obtain
\begin{align}
  \calZ_{\lri}(\rho,\tau) 
=\int_{\mfp\times W_{5}(\lri)}|x|^{\tau}\|f(\bfy_0),x\|^{2\rho}d\mu(x,
\mathbf{y}).\nonumber
\end{align}
Write $W_{5}(\lri)=D_{1}\dotcup D_{2}$, where
$D_{1}=\lri^{\times}\times\lri^{4}$, $D_{2}=\mfp\times W_{4}(\lri)$.
Thus
\begin{equation}\label{equ:partition.Z.m=1}
\calZ_{\lri}(\rho,\tau)=I_{D_{1}}+I_{D_{2}},
\end{equation}
where
\begin{align*}
  I_{D_{1}} &:=\int_{\mfp\times\lri^{\times}\times\lri^{4}}|x|^{\tau}\|f(\bfy_0),x\|^{2\rho}d\mu(x,\mathbf{y}),\\
  I_{D_{2}} &:=\int_{\mfp^{(2)}\times
    W_4(\lri)}|x|^{\tau}\|f(\bfy_0),x\|^{2\rho}d\mu(x,\mathbf{y}).
\end{align*}
In the following we compute each of the integrals $I_{D_{1}}$ and
$I_{D_{2}}$ in turn.

\subsection{Computation of $I_{D_{1}}$}
Clearly
\begin{equation*}
  I_{D_{1}} 
  =
  (1-q^{-1})\int_{\mfp\times\lri^{4}}|x|^{\tau}\|f(\bfy_0),x\|^{2\rho}d\mu(x,\bfy_0).
\end{equation*}
Recall the notation $A_d^0$ from Section~\ref{subsec:Notation} and
set, for $\aalpha\in A_4^0$,
\begin{equation}
  I_{D_{1},\aalpha}:=(1-q^{-1})\int_{\mfp\times(\aalpha+\mfp^{(4)})}|x|^{\tau}\|f(\bfy_0),x\|^{2\rho}d\mu(x,\bfy_0).\label{eq:I_D_1,a-1}
\end{equation}
Hence
\begin{equation}\label{eq:ID1.sum.m=1}
 I_{D_1} = \sum_{\aalpha\in A_4^0} I_{D_{1},\aalpha}.
\end{equation}
In the sequel we compute each of the integrals $I_{D_{1},\aalpha}$. It
will turn out that it suffices to distinguish three cases.  Let
$\bar{\aalpha}$ denote the image of $\aalpha$ mod $\mfp$.

\subsubsection{} Suppose that 
$f(\aalpha)\not\equiv0\bmod\mfp$. Then $\|f(\bfy_0),x\|^{2\rho}=1$
for $\bfy_0\in\aalpha+\mfp^{(4)}$, so
\begin{align*}
I_{D_{1},\aalpha} &
=(1-q^{-1})\int_{\mfp\times(\aalpha+\mfp^{(4)})}|x|^{\tau}d\mu(x,\bfy_0)\\ &
=(1-q^{-1})q^{-4}\int_{\mfp}|x|^{\tau}d\mu(x)\\ &
=\frac{(1-q^{-1})^{2}q^{-5-\tau}}{1-q^{-1-\tau}} =: I_{D_1}^{1}.
\end{align*}

\subsubsection{} 
Suppose that $f(\aalpha)\equiv0\bmod\mfp$ but~$\bar{\aalpha}\neq
0$. Near any $\lri$-point of the hypersurface defined by $f$ which
reduces to a smooth point mod $\mfp$ (i.e., any point away from the
origin) we may replace $f$ by the first, say, of four coordinate
functions $\tilde{y}_1,\tilde{y}_2,\tilde{y}_3,\tilde{y}_4$ in the
relevant integral; cf.~\cite[Section~6.1]{AKOVI/13}. Using
Lemma~\ref{lem:Integral-xy} we may thus rewrite \eqref{eq:I_D_1,a-1}
as
\begin{align*}
  I_{D_{1},\aalpha} & =(1-q^{-1})\int_{\mfp^{(5)}}|x|^{\tau}\|\tilde{y}_{1},x\|^{2\rho}d\mu(x,\tilde{\bfy})\\
  & 
=(1-q^{-1})q^{-3}\int_{\mfp^{(2)}}|x|^{\tau}\|\tilde{y}_{1},x\|^{2\rho}d\mu(x,
\tilde{y}_{1})\\
  & =\frac{q^{-5-\tau-2\rho}(1-q^{-2-\tau})(1-q^{-1})^{2}}{(1-q^{-2-\tau-2\rho}
)(1-q^{-1-\tau})} =:I_{D_1}^{2}.
\end{align*}

\subsubsection{} 
Suppose finally that $\bar{\aalpha}=0$. In this case,
\[
I_{D_{1},\aalpha}=(1-q^{-1})\int_{\mfp\times\mfp^{(4)}}|x|^{\tau}\|f(\bfy_0),x\|^{2\rho}d\mu(x,\bfy_0).
\]
We make the change of variables $\bfy_0=\varpi\bfy'_0$, that is,
$
y_{ij}=\varpi y_{ij}'
$
for $y'_{ij}\in\lri$.  The norm of the Jacobian of this change of
variables is $q^{-4}$.  As $f$ is homogeneous of degree $2$, 
\[
I_{D_{1},\aalpha}=(1-q^{-1})\int_{\mfp\times\lri^{4}}|x|^{\tau}\|\varpi^{2}f(\bfy'_0),x\|^{2\rho}q^{-4}d\mu(x,\bfy_0').
\]
Write $I_{D_{1},\aalpha}=I_{1}+I_{2}$, where
\begin{align}
I_{1} & :=(1-q^{-1})q^{-4}\int_{\mfp^{2}\times\lri^{4}}|x|^{\tau}\|\varpi^{2}f(\bfy'_0),x\|^{2\rho}d\mu(x,\bfy_0'),\nonumber\\
I_{2} & :=(1-q^{-1})q^{-4}\int_{\mfp\setminus\mfp^{2}\times\lri^{4}}|x|^{\tau}\|\varpi^{2}f(\bfy'_0),x\|^{2\rho}d\mu(x,\bfy_0').\nonumber
\end{align}
Consider $I_{1}$ and make the change of variables $x=\varpi^{2}x'$ for
$x'\in \lri$.  The norm of the Jacobian of this change of variables is
$q^{-2}$.  This yields
\begin{align}
I_{1}/(1-q^{-1}) &= q^{-4}\int_{\lri\times\lri^{4}}|\varpi^{2}x'|^{\tau}\|\varpi^{2}f(\bfy'_0),\varpi^{2}x'\|^{2\rho}q^{-2}d\mu(x',\bfy_0')\label{I1}\\ &
=q^{-6-2\tau-4\rho}\int_{\lri\times\lri^{4}}|x'|^{\tau}\|f(\bfy'_0),x'\|^{2\rho}d\mu(x',\bfy_0')\nonumber\\ &
=q^{-6-2\tau-4\rho}\left(\int_{\lri^{\times}\times\lri^{4}}d\mu(x',\bfy_0')+\frac{1}{1-q^{-1}}I_{D_{1}}\right)\nonumber\\ &
=q^{-6-2\tau-4\rho}\left(1-q^{-1}+\frac{1}{1-q^{-1}}I_{D_{1}}\right).\nonumber
\end{align}
Next, consider $I_{2}$. When $x\in\mfp\setminus\mfp^{2}$, then
$\|\varpi^{2}f(\bfy'_0),x\|^{2\rho}=|x|^{2\rho} = q^{-2\rho}$, so 
\begin{equation*}
I_{2}  =(1-q^{-1})q^{-4}\int_{\mfp\setminus\mfp^{2}\times\lri^{4}}|x|^{\tau+2\rho}d\mu(x,\bfy_0')
  =(1-q^{-1})^{2}q^{-5-\tau-2\rho}.
\end{equation*}
Thus
\begin{multline*}
I_{D_{1},\aalpha}=I_{1}+I_{2}\\=(1-q^{-1})q^{-6-2\tau-4\rho}\left(1-q^{-1}+\frac
{1}{1-q^{-1}}I_{D_{1}}\right)+(1-q^{-1})^{2}q^{-5-\tau-2\rho}=:I_{D_1}^{3}.
\end{multline*}

\subsubsection{Conclusion}
Taking the above three cases together and noting that
$$| \{ \aalpha\in A^0_4 \mid f(\aalpha)\neq 0\bmod \mfp\}| = | \GL_2(\Fq)| = q(q-1)^2(q+1),$$
\eqref{eq:ID1.sum.m=1} yields
\begin{align*}
I_{D_{1}} &=  |\GL_{2}(\F_{q})| I_{D_1}^{1} + (q^{4}-|\GL_{2}(\F_{q})|-1) I_{D_1}^{2} + I_{D_1}^{3} \\
&= |\GL_{2}(\F_{q})|\frac{(1-q^{-1})^{2}q^{-5-\tau}}{1-q^{-1-\tau}}\\ 
&\quad 
+(q^{4}-|\GL_{2}(\F_{q})|-1)\frac{q^{-5-\tau-2\rho}(1-q^{-2-\tau})(1-q^{-1 
})^{2}
}{(1-q^{-2-\tau-2\rho})(1-q^{-1-\tau})}\\
 & \quad +
(1-q^{-1})q^{-6-2\tau-4\rho}\left(1-q^{-1}+\frac{1}{1-q^{-1}}I_{D_{1}}
\right)+(1-q^{-1})^{2}q^{-5-\tau-2\rho}.
\end{align*}
Solving for $I_{D_{1}}$, we obtain
\begin{multline*}
I_{D_{1}}=  \frac{(1-q^{-1})^{2}q^{-5-\tau}}{1-q^{-6-2\tau-4\rho}}\cdot \\\Bigg(\frac{q(q-1)^{2}(q+1)}{1-q^{-1-\tau}}+\frac{q^{-2\rho}(q^{3}+q^{2}-q-1)(1-q^{-2-\tau})}{(1-q^{-2-\tau-2\rho})(1-q^{-1-\tau})}+q^{-1-\tau-4\rho}+q^{-2\rho}\Bigg).
\end{multline*}

\subsection{Computation of $I_{D_{2}}$}

Recall that $F_2(\bfY) = \{f(\bfY_0)^2\}$. Hence
\begin{align}\label{eq:ID2-small}
I_{D_{2}} & = \int_{\mfp\times\mfp\times 
W_{4}(\lri)}|x|^{\tau}\|f(\bfy_0),x\|^{2\rho}d\mu(x,y,\bfy_0)\\
& = 
q^{-1}\int_{\mfp\times\lri^{4}}|x|^{\tau}\|f(\bfy_0),x\|^{2\rho}d\mu(x, 
\bfy_0)\nonumber
\\ 
&
\quad -q^{-1}\int_{\mfp\times\mfp^{(4)}}|x|^{\tau}\|f(\bfy_0),x\|^{2\rho}d\mu(x,\bfy_0)\nonumber
\\ 
& = \frac{q^{-1}}{1-q^{-1}}I_{D_{1}}-q^{-1}\int_{\mfp\times\mfp^{(4)}}|x|^{\tau}\|f(\bfy_0),x\|^{2\rho}d\mu(x,\bfy_0
).\nonumber
\end{align}
To compute the integral
\[
I:=\int_{\mfp\times\mfp^{(4)}}|x|^{\tau}\|f(\bfy_0),x\|^{2\rho}d\mu(x,\bfy_0)
\]
we write it as $I=J_{1}+J_{2}$, where
\begin{align*}
J_{1}  &:=\int_{\mfp\setminus\mfp^{2}\times\mfp^{(4)}}|x|^{\tau}\|f(\bfy_0),x\|^{2\rho}d\mu(x,\bfy_0),\\
J_{2}  &:=\int_{\mfp^{2}\times\mfp^{(4)}}|x|^{\tau}\|f(\bfy_0),x\|^{2\rho}d\mu(x,\bfy_0).
\end{align*}
Clearly
\begin{equation*}
J_{1} =  \int_{\mfp\setminus\mfp^{2}\times\mfp^{(4)}}|x|^{\tau+2\rho}d\mu(x,\bfy_0)
  = q^{-4}\int_{\mfp\setminus\mfp^{2}}|x|^{\tau+2\rho}d\mu(x)
  =  q^{-5-\tau-2\rho}(1-q^{-1}).
\end{equation*}
Next, consider $J_{2}$ and make the change of variables $\bfy_0=
\varpi \bfy_0'$ for $\bfy_0\in \lri^4$. The norm of the Jacobian
of this change of variables is $q^{-4}$, whence, using \eqref{I1},
\begin{multline*}J_2 = \int_{\mfp^{(2)}\times\lri^{4}}|x|^{\tau}\|\varpi^{2}f(\bfy'_0), x\|^{2\rho}q^{-4}d\mu(x',\bfy_0')=\\\frac{I_1}{1-q^{-1}}=q^{-6-2\tau-4\rho}\left((1-q^{-1})+\frac{I_{D_{1}}}{1-q^{-1}}\right).
\end{multline*}

Hence, by \eqref{eq:ID2-small}, we get
\begin{align*}
I_{D_{2}} 
&=\frac{q^{-1}}{1-q^{-1}}I_{D_{1}}-q^{-1}(J_{1}+J_{2})\\ 
&=\frac{q^{-1}}{1-q^{-1}}I_{D_{1}}-q^{-1}\left(q^{-5-\tau-2\rho}(1-q^{
-1} )+q^{-6-2\tau-4\rho}\left((1-q^{-1})+\frac{I_{D_{1}}}{1-q^{-1}}\right)\right)\\
 &=\frac{q^{-1}(1-q^{-6-2\tau-4\rho})I_{D_{1}}}{1-q^{-1}}-q^{
-6-\tau-2\rho}(1-q^{-1})(1+q^{-1-\tau-2\rho}).
\end{align*}

\subsection{Conclusion}
With the computations of the two previous sections, equation
\eqref{equ:partition.Z.m=1} implies that, for $a=q^{-1}$,
$b=q^{-\tau}$, $c=q^{-\rho}$,
\begin{align*}
\calZ_{\lri}(\rho,\tau) &= I_{D_{1}}+I_{D_{2}}\\
&= 
\frac{ab(a-1)^{2}}{(1-{a}^{3}b{c}^{2})(1-{a}^{2}b{c}^{2})(1-ab)}\\
&\quad\Big({a}^{9}{b}^{2}{c}^{4}+{a}^{8}{b}^{2}{c}^{4}+{a}^{7}{b}^{2}{c}^{4}-{a}
^ { 7 } b 
{c}^{4}+{a}^{6}{b}^{2}{c}^{4}-2\,{a}^{6}b{c}^{4}+{a}^{5}{b}^{2}{c}^{4}-{a}^{5}b{ 
c}^{4}-{a}^{4}b{c}^{4}\\
&\quad-{a}^{5}b{c}^{2}-{a}^{4}b{c}^{2}+{a}^{4}{c}^{2}-2{a}^{3}b{c}^{2}+{a}^{3
}{c}^{2}-{a}^{2}b{c}^{2}+2\,{a}^{2}{c}^{2}+a{c}^{2}-{a}^{2}+1\Big)
\end{align*}
With \eqref{equ:zeta=integral} and a direct computation, this
completes the proof of Theorem~\ref{thm:m1n1}.
\begin{rem}
  As in \cite[Section~6]{StasinskiVoll-RepsTgps}, it is interesting to
  compare $\zeta_{Q_{1,1}}(s)$ to the Igusa integral of the relative
  invariant of the corresponding PVS. By \cite[p.~165]{Igusa/00}, with
  $t=q^{-s}$,
\[
Z_{1,1}(s):=\int_{\Mat_2(\mathfrak{o})}|f(\bfy_0)|^{s}\,
d\mu(\bfy_0) = \frac{(1-q^{-2})(1-q^{-1})}{(1-q^{-2}t)(1-q^{-1}t)}.
\]
We note that the real parts of the poles of $\zeta_{\group_{1,1}(\lri)}(s)$
are given by an additive translation of the real parts of the poles
of $Z_{1,1}(s)$, just as for the groups $F_{n,\delta}(\lri),G_{n}(\lri),H_{n}(\lri)$
considered in \cite{StasinskiVoll-RepsTgps}.
\end{rem}

\section{The zeta function of $\group_{2,1}(\lri)$}\label{sec:m2-n1}
Let $m=2$, $n=1$, and recall the notation from Sections~\ref{sec:Introduction}
and \ref{sec:lemmas}. In this case the Pfaffian of $\calR(\bfY)$ is
$\Pfaff(\calR(\bfY))=Y h(\bfY_0)$,
where $$h(\bfY_0)=Y_{11}Y_{22}-Y_{12}Y_{21}+Y_{31}Y_{42} -
Y_{32}Y_{41}.$$ For $0\leq j\leq3$ let
\[
F_{j}(\bfY)=\{g\mid g=g(\bfY)\text{ a principal
  \ensuremath{2j\times2j}\text{ minor of }\ensuremath{\calR({\bf Y})}}\}.
\]
 Given
integers $r,s$ such that $1\leq r<s\leq4$, we write
$m_{rs}=m_{rs}(\bfY_0)$ for the $2\times2$-minor of
$(\bfY_0)\in\Mat_{4\times 2}(\Z[\bfY_0])$ indexed by the $r$th and
$s$th rows. Note that $h = m_{12} + m_{34}$. One readily computes that
\begin{align*}
  F_{0}(\bfY) &=  \{1\},\\
  F_{1}(\bfY) &= \{0,Y^{2},Y_{ij}^{2}\mid i\in\{1,2,3,4\},j\in\{1,2\}\},\\
  F_{2}(\bfY) &= \{Y^{4},Y^{2}Y_{ij}^{2},m_{rs}(\bfY_0)^{2}\mid
  i\in\{1,2,3,4\},j\in\{1,2\},1\leq r<s\leq4\},\\
  F_{3}(\bfY) &= \{ \Pfaff(\calR(\bfY))^2\} =
  \{Y^{2}h(\bfY_0)^{2}\}.
\end{align*}
By \cite[Corollary~2.11]{StasinskiVoll-RepsTgps} we can compute
$\zeta_{\group_{2,1}(\lri)}(s)$ in terms of the $\mfp$-adic integral
\[
\calZ_{\lri}(\rho,\tau) := \int_{\mfp\times
  W_{9}(\lri)}|x|^{\tau}\prod_{j=1}^{3}\frac{\|F_{j}(\bfy)\cup
  F_{j-1}(\bfy)x^{2}\|^{\rho}}{\|F_{j-1}(\bfy)\|^{\rho}}d\mu(x,\mathbf{y}),
\]
where $\rho,\tau\in\C$. More precisely,
\begin{equation}\label{equ:zeta=integral.m2n1}
\zeta_{\group_{2,1}(\lri)}(s)=1+(1-q^{-1})^{-1}\calZ_{\lri}(-s/2,3s-10).
\end{equation}
The integral $\calZ_{\lri}(\rho,\tau)$ is a special case of (2.8) in
\cite[Section~2.2.3]{StasinskiVoll-RepsTgps} for the case $v=0$
and~$u=3$. Since $\|F_{0}(\bfy)\|=\|F_{1}(\bfy)\|=1$, we obtain
\begin{equation}
  \calZ_{\lri}(\rho,\tau)=\int_{\mfp\times
    W_{9}(\lri)}|x|^{\tau}\|F_{2}(\bfy),x^{2}\|^{\rho}\frac{\|F_{3}(\bfy)\cup
    F_{2}(\bfy)x^{2}\|^{\rho}}{\|F_{2}(\bfy)\|^{\rho}}d\mu(x,\mathbf{y}).\nonumber
\end{equation}
Write $W_{9}(\lri)=D_{1}\dotcup D_{2}$, where
$D_{1}=\lri^{\times}\times\lri^{8}$ $D_{2}=\mfp\times W_{8}(\lri)$.
Thus
\begin{equation}\label{equ:partition.Z.m=2}
 \calZ_{\lri}(\rho,\tau)=I_{D_{1}}+I_{D_2},
\end{equation}
where
\begin{align*}
 I_{D_{1}} &:=
 \int_{\mfp\times\lri^{\times}\times\lri^{8}}|x|^{\tau}\|F_{2}(\bfy),x^{2}\|^{\rho}\frac{\|F_{3}(\bfy)\cup
   F_{2}(\bfy)x^{2}\|^{\rho}}{\|F_{2}(\bfy)\|^{\rho}}d\mu(x,\mathbf{y}),\\ I_{D_2}
 & := \int_{\mfp\times\lri^{\times}\times
   W_8(\lri)}|x|^{\tau}\|F_{2}(\bfy),x^{2}\|^{\rho}\frac{\|F_{3}(\bfy)\cup
   F_{2}(\bfy)x^{2}\|^{\rho}}{\|F_{2}(\bfy)\|^{\rho}}d\mu(x,\mathbf{y}).
\end{align*}

In the following we compute each of the integrals $I_{D_{1}}$ and
$I_{D_{2}}$ in turn.  We will need to refer to the following
determinantal varieties (schemes over $\Z$):
\begin{align*}
V_{2} &= \Spec\Z[\bfY_0]/(m_{rs}(\bfY_0),\,1\leq r<s\leq4),\\
V_{3} &= \Spec\Z[\bfY_0]/(h(\bfY_0)).
\end{align*}
Clearly $V_{2}$ is a closed subscheme of $V_{3}$ and each fibre of
$V_{2}$ can be interpreted as the space of $4\times 2$-matrices of
rank at most $1$ over a field. Similarly, each fibre of $V_{3}$ can be
seen as the space of $4\times 2$-matrices or rank at most $1$ over a
field. It is well known that $V_2$ has codimension~$3$.

\subsection{Computation of $I_{D_{1}}$}

Note that $\|F_{2}(\bfy)\|=1$ and
$\|F_{3}(\bfy)\|=\|h(\bfy_0)^2\|$ as $y\in\lri^{\times}$.  Thus
\begin{align*}
  I_{D_{1}} = &\int_{\mfp\times\lri^{\times}\times\lri^{8}}|x|^{\tau}\|F_{3}(\bfy),x^{2}\|^{\rho}d\mu(x,\mathbf{y})\\
  = &\int_{\mfp\times\lri^{\times}\times\lri^{8}}|x|^{\tau}\|h(\bfy_0),x\|^{2\rho}d\mu(x,\mathbf{y})\\
  =
  &(1-q^{-1})\int_{\mfp\times\lri^{8}}|x|^{\tau}\|h(\bfy_0),x\|^{2\rho}d\mu(x,\bfy_0).
\end{align*}
We set, for $\aalpha\in A_8^0$,
\begin{equation}\label{eq:I_D_1,a}
  I_{D_{1},\aalpha}:=(1-q^{-1})\int_{\mfp\times(\aalpha+\mfp^{(8)})}|x|^{\tau}\%|h(\bfy_0),x\|^{2\rho}d\mu(x,\bfy_0).
\end{equation}
Hence
\begin{equation}\label{equ:ID1.sum.m=2}
 I_{D_1} = \sum_{\aalpha\in A_8^0} I_{D_{1},\aalpha}.
\end{equation}
In the sequel we compute each of the integrals $I_{D_{1},\aalpha}$. It
will turn out that it suffices to distinguish three cases.  

\subsubsection{} Suppose that $\bar{\aalpha}\protect\not\in
V_{3}(\F_{q})$.  In this case, $\|h(\bfy_0),x\|^{2\rho}=1$ for
$\bfy_0\in\aalpha+\mfp^{(8)}$, so
\begin{align*}
  I_{D_{1},\aalpha} &=(1-q^{-1})\int_{\mfp\times(\aalpha+\mfp^{(8)})}|x|^{\tau}d\mu(x,\bfy_0)\\
  &= (1-q^{-1})q^{-8}\int_{\mfp}|x|^{\tau}d\mu(x) =
  \frac{(1-q^{-1})^{2}q^{-9-\tau}}{1-q^{-1-\tau}} =: I_{D_1}^{1}.
\end{align*}

\subsubsection{} Suppose that $\bar{\aalpha}\in
V_{3}(\F_{q})\setminus\{0\}$.  Near any $\lri$-point of $V_3$ which
reduces to a smooth point mod $\mfp$ (i.e, a point away from the
origin), we may replace $h$ by the first, say, of eight coordinate
functions $\tilde{y}_{ij}$, where $i\in \{1,\dots,4\}$, $j\in\{1,2\}$,
in the integral~\eqref{eq:I_D_1,a}.  It may thus be rewritten, using
Lemma~\ref{lem:Integral-xy}, as
\begin{align*}
  I_{D_{1},\aalpha} 
&=(1-q^{-1})\int_{\mfp^{(9)}}|x|^{\tau}\|\tilde{y}_{11},x\|^{2\rho}d\mu(x,
\tilde{\bfy}_0)\\ 
&=(1-q^{-1})q^{-7}\int_{\mfp^{(2)}}|x|^{\tau}\|\tilde{y}_{11},x\|^{2\rho}d\mu(x, 
\tilde{y}_{1})\\
  &= 
\frac{q^{-9-\tau-2\rho}(1-q^{-2-\tau})(1-q^{-1})^{2}}{(1-q^{-2-\tau-2\rho})(1-q^
{-1-\tau})} =: I_{D_1}^{2}.
\end{align*}

\subsubsection{}
Suppose finally that $\bar{\aalpha}=0$.  In this case,
\[
I_{D_{1},\aalpha}=(1-q^{-1})\int_{\mfp^{(9)}}|x|^{\tau}\|h(\bfy_0),x\|^{2\rho}d\mu(x,\bfy_0).
\]
Make the change of variables $\bfy_0=\varpi\bfy'_0$, that is, $
y_{ij}=\varpi y_{ij}', $ for $y'_{ij}\in\lri$.  The norm of the
Jacobian of this change of variables is $q^{-8}$.  Since $h$ is
homogeneous of degree $2$,
\[
I_{D_{1},\aalpha}=(1-q^{-1})\int_{\mfp\times\lri^{8}}|x|^{\tau}\|\varpi^{2}h(\bfy'_0),x\|^{2\rho}q^{-8}d\mu(x,\bfy_0').
\]
Write $I_{D_{1},\aalpha}=I_{1}+I_{2}$, where
\begin{align*} I_{1}
  &:=(1-q^{-1})q^{-8}\int_{\mfp^{2}\times\lri^{8}}|x|^{\tau}\|\varpi^{2}h(\bfy'_0),x\|^{2\rho}d\mu(x,\bfy_0'),\\ I_{2}
  &:=(1-q^{-1})q^{-8}\int_{\mfp\setminus\mfp^{2}\times\lri^{8}}|x|^{\tau}\|\varpi^{2}h(\bfy'_0),x\|^{2\rho}d\mu(x,\bfy_0').
\end{align*}
Consider $I_{1}$ and make the change of variables $x=\varpi^{2}x'$ for
$x'\in\lri$. The norm of the Jacobian of this change of variables is
$q^{-2}$. This yields
\begin{align*}
I_{1}/(1-q^{-1}) &= q^{-8}\int_{\lri\times\lri^{8}}|\varpi^{2}x'|^{\tau}\|\varpi^{2}h(\bfy'_0),\varpi^{2}x'\|^{2\rho}q^{-2}d\mu(x',\bfy_0')\\
 &=q^{-10-2\tau-4\rho}\int_{\lri\times\lri^{8}}|x'|^{\tau}\|h(\bfy'_0),x'\|^{2\rho}d\mu(x',\bfy_0')\\
&=q^{-10-2\tau-4\rho}\left(\int_{\lri^{\times}\times\lri^{8}}d\mu(x',\mathbf{y}_
  {ij}')+\frac{1}{1-q^{-1}}I_{D_{1}}\right)\\
 &=q^{-10-2\tau-4\rho}\left(1-q^{-1}+\frac{1}{1-q^{-1}}I_{D_{1}}\right).
\end{align*}
Next, consider $I_{2}$. When $x\in\mfp\setminus\mfp^{2}$, we have 
$\|\varpi^{2}h(\bfy_0),x\|^{2\rho}=|x|^{2\rho}=q^{-2\rho}$, so 
$$
I_{2}=(1-q^{-1})q^{-8}\int_{\mfp\setminus\mfp^{2}\times\lri^{8}}|x|^{\tau+2\rho}
d\mu(x,\bfy_0')=(1-q^{-1})^{2}q^{-9-\tau-2\rho}.
$$
We thus obtain
\begin{multline*}
  I_{D_{1},\aalpha}=I_{1}+I_{2}=\\(1-q^{-1})q^{-10-2\tau-4\rho}\left(1-q^{-1}+\frac{1}{
      1-q^{-1}}I_{D_{1}}\right)+(1-q^{-1})^{2}q^{-9-\tau-2\rho} =:
  I_{D_1}^{3}.
\end{multline*}

\subsubsection{Conclusion}
Taking the above three cases together, \eqref{equ:ID1.sum.m=2}
yields
\begin{align}
I_{D_{1}} &= |\mathbb{A}^8(\Fq)\setminus V_{3}(\F_{q})| I_{D_1}^{1} + |V_{3}(\F_{q})\setminus\{0\}|I_{D_1}^{2} + I_{D_1}^3 \label{equ:ID1}\\
&= (q^8 - |V_{3}(\F_{q})|)\frac{(1-q^{-1})^{2}q^{-9-\tau}}{1-q^{-1-\tau}} +(|V_{3}(\F_{q})|-1)\frac{q^{-9-\tau-2\rho}(1-q^{-2-\tau})(1-q^{-1})^{2}}{(1-q^{-2-\tau-2\rho})(1-q^{-1-\tau})}\nonumber\\
 &\quad +(1-q^{-1})q^{-10-2\tau-4\rho}\left(1-q^{-1}+\frac{1}{1-q^{-1}}I_{D_{1}}\right)+(1-q^{-1})^{2}q^{-9-\tau-2\rho}\nonumber.
\end{align}
\begin{lem}\label{lem:V2V3-Fq}
\begin{equation*}
|V_{2}(\F_{q})|  = (q+1)(q^4-1)+1, \quad
|V_{3}(\F_{q})|  = q^{3}(q^{4}+q-1).
\end{equation*}
\end{lem}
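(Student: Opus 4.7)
The plan is to compute the two cardinalities separately by elementary counting arguments, each exploiting a convenient structural feature of the defining equations.

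For $|V_2(\F_q)|$, I would stratify by rank. A $4 \times 2$ matrix over $\F_q$ has rank at most $1$ if and only if it is zero or an outer product $v\transpose{w}$ with $v \in \F_q^4 \setminus \{0\}$ and $w \in \F_q^2 \setminus \{0\}$; two such pairs produce the same matrix precisely when they differ by a scaling $(v,w) \sim (\lambda v, \lambda^{-1}w)$ with $\lambda \in \F_q^\times$. This yields $1 + (q^4-1)(q^2-1)/(q-1) = 1 + (q+1)(q^4-1)$, matching the stated formula.

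For $|V_3(\F_q)|$, I would exploit the block decomposition
\begin{equation*}
h(\bfY_0) = \det\begin{pmatrix} Y_{11} & Y_{12} \\ Y_{21} & Y_{22} \end{pmatrix} + \det\begin{pmatrix} Y_{31} & Y_{32} \\ Y_{41} & Y_{42} \end{pmatrix},
\end{equation*}
which presents $h$ as a sum of two $2\times 2$-determinantal quadratic forms in disjoint variable sets. A direct convolution then suffices: letting $N_\alpha$ denote the number of $(a,b,c,d) \in \F_q^4$ with $ad - bc = \alpha$, the case $\alpha = 0$ counts $2 \times 2$ matrices of rank at most $1$, giving $N_0 = q^3 + q^2 - q$ by the same stratification used above (applied to $2\times 2$ matrices); for $\alpha \neq 0$, all nonzero values are achieved equally often (row scaling by $\mu \in \F_q^\times$ multiplies the determinant by $\mu$, hence acts transitively on $\F_q^\times$), so $N_\alpha = (q^4 - N_0)/(q-1) = q^3 - q$. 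Summing $N_\alpha N_{-\alpha}$ over $\alpha \in \F_q$ yields
\begin{equation*}
|V_3(\F_q)| = (q^3 + q^2 - q)^2 + (q-1)(q^3 - q)^2 = q^7 + q^4 - q^3 = q^3(q^4 + q - 1),
\end{equation*}
which is also the standard point count $q^{2n-1} + q^n - q^{n-1}$ for a hyperbolic quadric in $2n = 8$ variables, consistent with $h$ being a non-degenerate quadratic form of plus type and Witt index $4$ on $\F_q^8$.

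Both computations are routine and I do not anticipate a genuine obstacle. The only modest care required is in justifying the uniform distribution of $N_\alpha$ for nonzero $\alpha$, which is immediate from the row-scaling observation just mentioned and works in any characteristic, so the argument is independent of whether $\mathrm{char}(\F_q) = 2$.
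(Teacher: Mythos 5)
Your proof is correct and aligns with what the paper has in mind: the paper's entire proof is the assertion that the formulas ``can be easily proved directly or read off from'' Laksov--Thorup, and your argument is precisely the advertised direct count. The rank stratification $1 + (q^4-1)(q^2-1)/(q-1)$ for $V_2$ and the convolution $\sum_\alpha N_\alpha N_{-\alpha}$ with $N_0 = q^3+q^2-q$, $N_\alpha = q^3-q$ for $\alpha\neq0$ (justified by the row-scaling bijection) are both sound and independent of characteristic, and the final total agrees, as you note, with the standard point count $q^{2n-1}+q^n-q^{n-1}$ of a split quadric in $2n=8$ variables.
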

\begin{proof}
This can be easily proved directly or read off from 
\cite[Proposition~3.1]{LaksovThorup/94}.
\end{proof}
Solving \eqref{equ:ID1} for $I_{D_{1}}$, we obtain
\begin{multline}
I_{D_{1}} =
\frac{(1-q^{-1})^{2}q^{-9-\tau}}{1-q^{-10-2\tau-4\rho}}
\\ \cdot\Bigg(\frac{(q^{8}-q^{3}(q^{4}+q-1))}{1-q^{-1-\tau}}+\frac{q^{-2\rho}(q^{3}(q^{4}+q-1)-1)(1-q^{-2-\tau})}{(1-q^{-2-\tau-2\rho})(1-q^{-1-\tau})}
+q^{-1-\tau-4\rho}+q^{-2\rho}\Bigg).\label{equ:ID1.solved}
\end{multline}

\subsection{Computation of $I_{D_{2}}$}\label{subsec:ID2.m=2}
We set, for $\aalpha\in A_8$,
\begin{equation}
\label{eq:I_D_2,a}
I_{D_{2},\aalpha}  = 
\int_{\mfp^{(2)}\times(\aalpha+\mfp^{(8)})}|x|^{\tau}\|F_{2}(\bfy),x^{2}\|^{\rho
}\frac{\|F_{3}(\bfy)\cup 
F_{2}(\bfy)x^{2}\|^{\rho}}{\|F_{2}(\bfy)\|^{\rho}}d\mu(x,\mathbf{y}).
\end{equation}
Hence
$$I_{D_2} = \sum_{\aalpha\in A_8} I_{D_2,\aalpha}.$$
In the sequel we compute each of the integrals $I_{D_2,\aalpha}$. We
will distinguish three cases, where the third one will be split
further into seven subcases.

\subsubsection{} If $\bar{\aalpha}\not\in V_{3}(\F_{q})$ and $\bfy =
(y,\bfy_0)\in\mfp\times(\aalpha+\mfp^{(8)})$, then
$|m_{rs}(\bfy_0)|=1$ for some $r<s$, and so $\|F_{2}(\bfy)\|=1$.
Moreover, $|h(\aalpha)|=1$, so $\|F_{3}(\bfy)\|=|y^{2}|$. Thus, using
Lemma~\ref{lem:Integral-xy},
\begin{align*}
I_{D_{2},\aalpha}  
 &= q^{-8}\int_{\mfp^{(2)}}|x|^{\tau}\|x,y\|^{2\rho}d\mu(x,y)\\
 &= \boxed{\frac{q^{-10-\tau-2\rho}(1-q^{-2-\tau})(1-q^{-1})}{(1-q^{-2-\tau-2\rho}
)(1-q^{-1-\tau})}} =: I_{D_{2}}^{1}.
\end{align*}

\subsubsection{} If $\bar{\aalpha}\in V_{3}(\F_{q})\setminus
V_{2}(\F_{q})$ and $\bfy =
(y,\bfy_0)\in\mfp\times(\aalpha+\mfp^{(8)})$, then
$\|F_{2}(\bfy)\|=1$, as above. Near any $\lri$-point of $V_3$ which
reduces to a smooth point mod $\mfp$ (i.e., any point away from the
origin) we may replace $h$ by the first, say, of eight coordinate functions
$\tilde{y}_{ij}$, where $i\in \{1,\dots,4\}$, $j\in\{1,2\}$, in the
integral~\eqref{eq:I_D_2,a}. Thus, using
Lemma~\ref{lem:Integral-x-yz},
\begin{align*}
I_{D_{2},\aalpha} &= 
\int_{\mfp^{(2)}\times(\aalpha+\mfp^{(8)})}|x|^{\tau}\|F_{3}(\bfy),x^{2}\|^{\rho
}d\mu(x,\mathbf{y})=\int_{\mfp^{(10)}}|x|^{\tau}\|y\tilde{y}_{1},x\|^{2\rho}
d\mu(x,y,\tilde{\bfy}_0)\\
 &=q^{-7}\int_{\substack{\mfp^{(3)}}
}|x|^{\tau}\|y\tilde{y}_{1},x\|^{2\rho}d\mu(x,y)\\
&=\boxed{\frac{q^{-10-\tau-2\rho}(1-q^{-1})(1-q^{-1-\tau}+q^{-1-\tau-2\rho}-2q^{
-2-\tau-2\rho}+q^{-4-2\tau-2\rho})}{(1-q^{-2-\tau-2\rho})^{2}(1-q^{-1-\tau})}} 
=: I_{D_{2}}^{2}.
\end{align*}

\subsubsection{}
Finally we consider the case $\bar{\aalpha}\in V_{2}(\F_{q})$, which
will itself split into seven subcases.

If $\bar{\aalpha}\in V_{2}(\F_{q})$ and $\bfy =
(y,\bfy_0)\in\mfp\times(\aalpha+\mfp^{(8)})$, then
$|m_{rs}(\bfy_0)|<1$ for all~$r<s$. Since $\bar{\aalpha}$ is a
smooth point, we may introduce coordinates
$\tilde{\bfy}_0=(\tilde{y}_{ij})$, for $i\in\{1,\dots,4\}$,
$j\in\{1,2\}$, such that the $\lri$-points $V_{3}(\lri)$ are given by
the vanishing of the coordinate function
$\tilde{y}_{1}:=\tilde{y}_{11}$, and $V_{2}(\lri)$ is given by the
vanishing of the coordinate function $\tilde{y}_{1}$,
$\tilde{y}_{2}:=\tilde{y}_{12}$ and
$\tilde{y}_{3}:=\tilde{y}_{13}$. Note that $V_3$ has codimension one
in $\mathbb{A}^8$ and that $V_2$ has codimension two in $V_3$. Since
$\bar{\aalpha}\neq0$ as $\aalpha\in W_{8}(\lri)$, there exists at
least one $y_{ij}$ such that $|y_{ij}|=1$.  Thus
\begin{align}
  I_{D_{2},\aalpha} &= \int_{\mfp^{(2)}\times(\aalpha+\mfp^{(8)})}|x|^{\tau}\|F_{2}(\bfy),x^{2}\|^{\rho}\frac{\|F_{3}(\bfy)\cup F_{2}(\bfy)x^{2}\|^{\rho}}{\|F_{2}(\bfy)\|^{\rho}}d\mu(x,\mathbf{y})\nonumber\\
 &= \int_{\mfp^{(10)}}|x|^{\tau}\|x,y,\tilde{y}_{1},\tilde{y}_{2},\tilde{y}_{3}\|^{
2\rho}\frac{\|xy,x\tilde{y}_{1},x\tilde{y}_{2},x\tilde{y}_{3},y\tilde{y}_{1}\|^{
2\rho}}{\|y,\tilde{y}_{1},\tilde{y}_{2},\tilde{y}_{3}\|^{2\rho}}d\mu(x,y,\tilde{
\bfy})\nonumber \\ 
&= 
q^{-5}\int_{\mfp^{(5)}}|x|^{\tau}\|x,y,\tilde{y}_{1},\tilde{y}_{2},\tilde{y}_{3 
}\|^{2\rho}\frac{\|xy,x\tilde{y}_{1},x\tilde{y}_{2},x\tilde{y}_{3},y\tilde{y}_{1 
}\|^{2\rho}}{\|y,\tilde{y}_{1},\tilde{y}_{2},\tilde{y}_{3}\|^{2\rho}}d\mu(x,y, 
\tilde{\bfy}) =: I_{D_{2}}^{3}.\nonumber
\end{align}
We partition the domain of integration $\mfp^{(5)}$ into pieces on
which the above integral simplifies. 
For $\bfa=(a_1,a_2,a_3,a_4,a_5)\in\mfp^{(5)}$,
define $$M(\bfa)=\{i\in\{1,2,3,4,5\} \mid |a_i|=\|\bfa\|\}.$$ That is,
$M(\bfa)$ comprises those coordinates where $\|\bfa\| =
\max\{|a_i|\mid i\in\{1,\dots,5\}\}$ is attained. Define the following
mutually disjoint subsets of $\mfp^{(5)}$:
\begin{align*}
\Omega_{1} &=\{\bfa\in\mfp^{(5)}\mid 1\in M(\bfa)\},\\
\Omega_{2} &=\{\bfa\in\mfp^{(5)}\mid 1\notin M(\bfa),\, 
M(\bfa)\cap\{2,3\}=\{2\}\},\\
\Omega_{3} &=\{\bfa\in\mfp^{(5)}\mid 1\notin M(\bfa),\, 
M(\bfa)\cap\{2,3\}=\{3\}\},\\
\Omega_{4} &=\{\bfa\in\mfp^{(5)}\mid 1\notin M(\bfa),\, 
M(\bfa)\cap\{2,3\}=\{2,3\}\},\\
\Omega_{5} &=\{\bfa\in\mfp^{(5)}\mid 1\notin M(\bfa),\, 
M(\bfa)\cap\{2,3\}=\varnothing,\, 
M(\bfa)\cap\{4,5\}=\{4\}\},\\
\Omega_{6} &=\{\bfa\in\mfp^{(5)}\mid 1\notin M(\bfa),\, 
M(\bfa)\cap\{2,3\}=\varnothing,\, 
M(\bfa)\cap\{4,5\}=\{5\}\},\\
\Omega_{7} &=\{\bfa\in\mfp^{(5)}\mid 1\notin M(\bfa),\, 
M(\bfa)\cap\{2,3\}=\varnothing,\, 
M(\bfa)\cap\{4,5\}=\{4,5\}\}.
\end{align*}
We obtain a partition $\mfp^{(5)}=\bigdotcup_{i=1}^{7}\Omega_{i}$ and
a corresponding decomposition
\[
I_{D_{2}}^{3}=q^{-5}\sum_{i=1}^{7}I_{D_{2}}^{3,i},
\]
where, for $i=1,\dots,7$, we set
\[
I_{D_{2}}^{3,i}:=\int_{\Omega_{i}}|x|^{\tau}\|x,y,\tilde{y}_{1},\tilde{y}_{2},\tilde{y}_{3}\|^{2\rho}\frac{\|xy,x\tilde{y}_{1},x\tilde{y}_{2},x\tilde{y}_{3},y\tilde{y}_{1}\|^{2\rho}}{\|y,\tilde{y}_{1},\tilde{y}_{2},\tilde{y}_{3}\|^{2\rho}}d\mu(x,y,\tilde{\bfy}).
\]
We compute each of the integrals $I_{D_{2}}^{3,i}$
in turn.

\subsubsection{Subcase 1}

Assume that $(x,y,\tilde{y}_{1},\tilde{y}_{2},\tilde{y}_{3})\in\Omega_{1}$.
Then $|x|\geq\|y,\tilde{y}_{1},\tilde{y}_{2},\tilde{y}_{3}\|$, so
$|x\tilde{y}_{1}|\geq|y\tilde{y}_{1}|$, and 
\[
\|xy,x\tilde{y}_{1},x\tilde{y}_{2},x\tilde{y}_{3},y\tilde{y}_{1}\|=\|xy,x\tilde{y}_{1},x\tilde{y}_{2},x\tilde{y}_{3}\|=|x|\cdot\|y,\tilde{y}_{1},\tilde{y}_{2},\tilde{y}_{3}\|.
\]
Thus
\[
I_{D_{2}}^{3,1}=\int_{\Omega_{1}}|x|^{\tau+4\rho}d\mu(x,y,\tilde{\bfy}).
\]
For $X\in\N$,
\begin{align*}
\lefteqn{\mu(\{(x,y,\tilde{y}_{1},\tilde{y}_{2},\tilde{y}_{3}) \in\Omega_{1}\mid v(x)=X\})}\\
 &=\mu(\{(x,y,\tilde{y}_{1},\tilde{y}_{2},\tilde{y}_{3})\in\mfp^{(5)}\mid v(x)=X,\, X\leq\min\{v(y),v(\tilde{y}_{1}),v(\tilde{y}_{2}),v(\tilde{y}_{3})\})\\
&=\mu(\{(x,y,\tilde{y}_{1},\tilde{y}_{2},\tilde{y}_{3})\in\mfp^{X}\setminus\mfp^
{X+1}\times(\mfp^{X})^{4}\})=(1-q^{-1})q^{-5X}.
\end{align*}
Thus
\begin{equation*}
I_{D_{2}}^{3,1} =\sum_{\substack{X\in\N}
}(1-q^{-1})q^{-5X}q^{-(\tau+4\rho)X} 
  =\boxed{\frac{q^{-5-\tau-4\rho}(1-q^{-1})}{1-q^{-5-\tau-4\rho}}}.
\end{equation*}
Assume now that $(x,y,\tilde{y}_{1},\tilde{y}_{2},\tilde{y}_{3})\notin\Omega_{1}$,
that is, $x\notin M$. Then $|x|<\|y,\tilde{y}_{1},\tilde{y}_{2},\tilde{y}_{3}\|$,
and so 
\begin{equation}
I_{D_{2}}^{3,i}=\int_{\Omega_{i}}|x|^{\tau}\|xy,x\tilde{y}_{1},x\tilde{y}_{2},x\tilde{y}_{3},y\tilde{y}_{1}\|^{2\rho}d\mu(x,\tilde{\bfy}),\nonumber
\end{equation}
for all $2\leq i\leq7$. We treat these six remaining cases in what
follows.

\subsubsection{\label{sub:Case-2}Subcase 2}

We have
\[
I_{D_{2}}^{3,2}=\int_{\Omega_{2}}|x|^{\tau}|y|^{2\rho}\|x,\tilde{y}_{1}\|^{2\rho}d\mu(x,y,\tilde{\bfy})
\]
and, for fixed $(X,Y,\tilde{Y}_{1},\tilde{Y}_{2},\tilde{Y}_{3})\in\N^{5}$,
\begin{multline*}
\mu(\{(x,y,\tilde{y}_{1},\tilde{y}_{2},\tilde{y}_{3}) \in\Omega_{2}\mid v(x)=X,v(y)=Y,v(\tilde{y}_{i})=\tilde{Y}_{i},\text{ for }i=1,2,3\})\\
  =\begin{cases}
(1-q^{-1})^{5}q^{-X-Y-\tilde{Y}_{1}-\tilde{Y}_{2}-\tilde{Y}_{3}} & \text{if }Y<X,\;Y<\tilde{Y}_{1},\;Y\leq\tilde{Y}_{2},\;Y\leq\tilde{Y}_{3},\\
0 & \text{otherwise}.
\end{cases}
\end{multline*}
We thus get
\begin{align*}
I_{D_{2}}^{3,2} &=(1-q^{-1})^{5}\sum_{\substack{(X,Y,\tilde{Y}_{i})\in\N^{5}\\
Y<X,\;Y<\tilde{Y}_{1},\;Y\leq\tilde{Y}_{2},\;Y\leq\tilde{Y}_{3}
}
}q^{-X-Y-\tilde{Y}_{1}-\tilde{Y}_{2}-\tilde{Y}_{3}}q^{-\tau X}q^{-2\rho Y}q^{-2\rho\min\{X,\tilde{Y}_{1}\}}\\
 &=(1-q^{-1})^{5}\sum_{\substack{(X,Y,\tilde{Y}_{i})\in\N^{5}\\
Y<X,\;Y<\tilde{Y}_{1},\;Y\leq\tilde{Y}_{2},\;Y\leq\tilde{Y}_{3}
}
}q^{-(1+\tau)X}q^{-(1+2\rho)Y}q^{-\tilde{Y}_{1}}q^{-\tilde{Y}_{2}}q^{-\tilde{Y}_{3}}q^{-2\rho\min\{X,\tilde{Y}_{1}\}}.
\end{align*}
Set $X=Y+X'$, $\tilde{Y}_{1}=Y+\tilde{Y}_{1}'$,
$\tilde{Y}_{2}=Y+\tilde{Y}_{2}'$, $\tilde{Y}_{3}=Y+\tilde{Y}_{3}'$,
where $X',\tilde{Y}_{1}'\in\N$ and
$\tilde{Y}_{2}',\tilde{Y}_{3}'\in\N_{0}$. Using
Lemma~\ref{lem:Identities}\,\eqref{enu:AKOV-Identity}, we then get
\begin{align*}
I_{D_{2}}^{3,2} &= (1-q^{-1})^{5}\sum_{\substack{(X',Y,\tilde{Y}_{1}',\tilde{Y}_{2}',\tilde{Y}_{3}')\in\N^{3}\times\N_{0}^{2}}
}q^{-(1+\tau)(Y+X')}q^{-(1+2\rho)Y}q^{-(Y+\tilde{Y}_{1}')}q^{-(Y+\tilde{Y}_{2}')}q^{-(Y+\tilde{Y}_{3}')}\\
 & \quad\quad \cdot q^{-2\rho\min\{Y+X',Y+\tilde{Y}_{1}'\}}\\
 &= (1-q^{-1})^{5}\sum_{\substack{(X',Y,\tilde{Y}_{1}',\tilde{Y}_{2}',\tilde{Y}_{3}')\in\N^{3}\times\N_{0}^{2}}
}q^{-(1+\tau)X'}q^{-(5+\tau+4\rho)Y}q^{-\tilde{Y}_{1}'}q^{-\tilde{Y}_{2}'}q^{-\tilde{Y}_{3}'}q^{-2\rho\min\{X',\tilde{Y}_{1}'\}}\\
 &= (1-q^{-1})^{5}\frac{1}{(1-q^{-1})^{2}}\frac{q^{-5-\tau-4\rho}}{1-q^{-5-\tau-4\rho}}\sum_{\substack{(X',\tilde{Y}_{1}')\in\N^{2}}
}q^{-(1+\tau)X'}q^{-\tilde{Y}_{1}'}q^{-2\rho\min\{X',\tilde{Y}_{1}'\}}\\
 &= (1-q^{-1})^{5}\frac{1}{(1-q^{-1})^{2}}\frac{q^{-5-\tau-4\rho}}{1-q^{-5-\tau-4\rho}}\frac{q^{-2-\tau-2\rho}(1-q^{-2-\tau})}{(1-q^{-2-\tau-2\rho})(1-q^{-1-\tau})(1-q^{-1})}\\
 &= \boxed{\frac{q^{-7-2\tau-6\rho}(1-q^{-1})^{2}(1-q^{-2-\tau})}{(1-q^{-5-\tau-4\rho})(1-q^{-2-\tau-2\rho})(1-q^{-1-\tau})}}.
\end{align*}

\subsubsection{Subcase 3}

We have
\[
I_{D_{2}}^{3,3}=\int_{\Omega_{3}}|x|^{\tau}|\tilde{y}_{1}|^{2\rho}\|x,y\|^{2\rho}d\mu(x,y,\tilde{\bfy}).
\]
Since this integral is given by that in Section~\ref{sub:Case-2} by
permuting the variables $y$ and $\tilde{y}_{1}$, the two integrals
coincide, that is, $I_{D_{2}}^{3,3}=I_{D_{2}}^{3,2}$.

\subsubsection{Subcase 4}

We have
\[
I_{D_{2}}^{3,4}=\int_{\Omega_{4}}|x|^{\tau}|\tilde{y}_{1}|^{2\rho}\|x,y\|^{2\rho}d\mu(x,y,\tilde{\bfy})
\]
and, for fixed $(X,Y,\tilde{Y}_{1},\tilde{Y}_{2},\tilde{Y}_{3})\in\N^{5}$,
\begin{multline*}
\mu(\{(x,y,\tilde{y}_{1},\tilde{y}_{2},\tilde{y}_{3})  \in\Omega_{4}\mid v(x)=X,v(y)=Y,v(\tilde{y}_{i})=\tilde{Y}_{i},\text{ for }i=1,2,3\})\\
= \begin{cases}
(1-q^{-1})^{5}q^{-X-Y-\tilde{Y}_{1}-\tilde{Y}_{2}-\tilde{Y}_{3}} & \text{if }Y<X,\;Y=\tilde{Y}_{1},\;Y\leq\tilde{Y}_{2},\;Y\leq\tilde{Y}_{3},\\
0 & \text{otherwise}.
\end{cases}
\end{multline*}
We thus get
\begin{align*}
I_{D_{2}}^{3,4} &=(1-q^{-1})^{5}\sum_{\substack{(X,Y,\tilde{Y}_{i})\in\N^{5}\\
Y<X,\;Y=\tilde{Y}_{1},\;Y\leq\tilde{Y}_{2},\;Y\leq\tilde{Y}_{3}
}
}q^{-X-Y-\tilde{Y}_{1}-\tilde{Y}_{2}-\tilde{Y}_{3}}q^{-\tau X}q^{-2\rho\tilde{Y}_{1}}q^{-2\rho\min\{X,Y\}}\\
 &=(1-q^{-1})^{5}\sum_{\substack{(X,Y,\tilde{Y}_{2},\tilde{Y}_{3})\in\N^{4}\\
Y<X,\;Y\leq\tilde{Y}_{2},\;Y\leq\tilde{Y}_{3}
}
}q^{-(1+\tau)X}q^{-2(1+\rho)Y}q^{-\tilde{Y}_{2}}q^{-\tilde{Y}_{3}}q^{-2\rho\min\{X,Y\}}.
\end{align*}
Set $X=Y+X'$, $\tilde{Y}_{2}=Y+\tilde{Y}_{2}'$,
$\tilde{Y}_{3}=Y+\tilde{Y}_{3}'$, where $X'\in\N$, $\tilde{Y}_{2}',\tilde{Y}_{3}'\in\N_{0}$. We then get
\begin{align*}
\lefteqn{I_{D_{2}}^{3,4}}\\ &=(1-q^{-1})^{5}\sum_{(X',Y,\tilde{Y}_{2},\tilde{Y}_{3})\in\N^{2}\times\N_{0}^{2}}q^{-(1+\tau)(Y+X')}q^{-2(1+\rho)Y}q^{-(Y+\tilde{Y}_{2})}q^{-(Y+\tilde{Y}_{3})}\\ &\quad\quad \cdot q^{-2\rho\min\{Y+X',Y\}}\\
 &=(1-q^{-1})^{5}\sum_{(X',Y,\tilde{Y}_{2},\tilde{Y}_{3})\in\N^{2}\times\N_{0}^{2}}q^{-(1+\tau)X'}q^{-(5+\tau+4\rho)Y}q^{-\tilde{Y}_{2}}q^{-\tilde{Y}_{3}}\\
 &=(1-q^{-1})^{5}\frac{q^{-1-\tau}}{1-q^{-1-\tau}}\frac{q^{-5-\tau-4\rho}}{1-q^{-5-\tau-4\rho}}\frac{1}{(1-q^{-1})^{2}} = \boxed{\frac{q^{-6-2\tau-4\rho}(1-q^{-1})^{3}}{(1-q^{-1-\tau})(1-q^{-5-\tau-4\rho})}}.
\end{align*}

\subsubsection{\label{sub:Case-5}Subcase 5}

We have
\[
I_{D_{2}}^{3,5}=\int_{\Omega_{5}}|x|^{\tau}\|x\tilde{y}_{2},y\tilde{y}_{1}\|^{2\rho}d\mu(x,y,\tilde{\bfy})
\]
and, for fixed $(X,Y,\tilde{Y}_{1},\tilde{Y}_{2},\tilde{Y}_{3})\in\N^{5}$,
\begin{multline*}
\mu(\{(x,y,\tilde{y}_{1},\tilde{y}_{2},\tilde{y}_{3}) \in\Omega_{5}\mid v(x)=X,v(y)=Y,v(\tilde{y}_{i})=\tilde{Y}_{i},\text{ for }i=1,2,3\})\\
  =\begin{cases}
(1-q^{-1})^{5}q^{-X-Y-\tilde{Y}_{1}-\tilde{Y}_{2}-\tilde{Y}_{3}} & \text{if }\tilde{Y}_{2}<X,\;\tilde{Y}_{2}<Y,\;\tilde{Y}_{2}<\tilde{Y}_{1},\;\tilde{Y}_{2}<\tilde{Y}_{3},\\
0 & \text{otherwise}.
\end{cases}
\end{multline*}
We thus get
\begin{align*}
  I_{D_{2}}^{3,5} &=(1-q^{-1})^{5}\sum_{\substack{(X,Y,\tilde{Y}_{i})\in\N^{5}\\
      \tilde{Y}_{2}<X,\;\tilde{Y}_{2}<Y,\;\tilde{Y}_{2}<\tilde{Y}_{1},\;\tilde{Y}_{2}<\tilde{Y}_{3}
    }
  }q^{-X-Y-\tilde{Y}_{1}-\tilde{Y}_{2}-\tilde{Y}_{3}}q^{-\tau X}q^{-2\rho\min\{X+\tilde{Y}_{2},Y+\tilde{Y}_{1}\}}\\
  &= (1-q^{-1})^{5}\sum_{\substack{(X,Y,\tilde{Y}_{i})\in\N^{5}\\
      \tilde{Y}_{2}<X,\;\tilde{Y}_{2}<Y,\;\tilde{Y}_{2}<\tilde{Y}_{1},\;\tilde{Y}_{2}<\tilde{Y}_{3}
    }
  }q^{-(1+\tau)X}q^{-Y}q^{-\tilde{Y}_{1}}q^{-\tilde{Y}_{2}}q^{-\tilde{Y}_{3}}q^{-2\rho\min\{X+\tilde{Y}_{2},Y+\tilde{Y}_{1}\}}.
\end{align*}
Set $X=\tilde{Y}_{2}+X'$, $Y=\tilde{Y}_{2}+Y'$, $\tilde{Y}_{1}=\tilde{Y}_{2}+\tilde{Y}_{1}'$,
$\tilde{Y}_{3}=\tilde{Y}_{2}+\tilde{Y}_{3}'$, for $X',Y',\tilde{Y}_{1}',\tilde{Y}_{3}'\in\N$.
Then
\begin{align*}
\lefteqn{I_{D_{2}}^{3,5} }\\&=(1-q^{-1})^{5}\sum_{(X',Y',\tilde{Y}_{1}',\tilde{Y}_{2},\tilde{Y}_{3}')\in\N^{5}}q^{-(1+\tau)(\tilde{Y}_{2}+X')}q^{-(\tilde{Y}_{2}+Y')}q^{-(\tilde{Y}_{2}+\tilde{Y}_{1}')}q^{-\tilde{Y}_{2}}q^{-(\tilde{Y}_{2}+\tilde{Y}_{3}')}\\
 &\quad\quad \cdot q^{-2\rho\min\{\tilde{Y}_{2}+X'+\tilde{Y}_{2},\tilde{Y}_{2}+Y'+\tilde{Y}_{2}+\tilde{Y}_{1}'\}}\\
 &=(1-q^{-1})^{5}\sum_{(X',Y',\tilde{Y}_{1}',\tilde{Y}_{2},\tilde{Y}_{3}')\in\N^{5}}q^{-(5+\tau+4\rho)\tilde{Y}_{2}}q^{-(1+\tau)X'}q^{-Y'}q^{-\tilde{Y}_{1}'}q^{-\tilde{Y}_{3}'}q^{-2\rho\min\{X',Y'+\tilde{Y}_{1}'\}}\\
 &=(1-q^{-1})^{5}\frac{q^{-5-\tau-4\rho}}{1-q^{-5-\tau-4\rho}}\frac{q^{-1}}{1-q^{-1}}\sum_{(X',Y',\tilde{Y}_{1}')\in\N^{3}}q^{-(1+\tau)X'}q^{-(Y'+\tilde{Y}_{1}')}q^{-2\rho\min\{X',Y'+\tilde{Y}_{1}'\}}.
\end{align*}
Applying Lemma~\ref{lem:Identities}\,\eqref{enu:Identity-2} we get
\begin{align*}
\lefteqn{{I_{D_{2}}^{3,5}}} \\&=(1-q^{-1})^{4}\frac{q^{-6-\tau-4\rho}}{1-q^{-5-\tau-4\rho}}\frac{q^{-3-\tau-2\rho}(1-q^{-1-\tau}+q^{-1-\tau-2\rho}-2q^{-2-\tau-2\rho}+q^{-4-2\tau-2\rho})}{(1-q^{-2-\tau-2\rho})^{2}(1-q^{-1-\tau})(1-q^{-1})^{2}}\\
 &=\boxed{\frac{q^{-9-2\tau-6\rho}(1-q^{-1})^{2}(1-q^{-1-\tau}+q^{-1-\tau-2\rho}-2q^{-2-\tau-2\rho}+q^{-4-2\tau-2\rho})}{(1-q^{-5-\tau-4\rho})(1-q^{-2-\tau-2\rho})^{2}(1-q^{-1-\tau})}}.
\end{align*}

\subsubsection{Subcase 6}

We have
\[
I_{D_{2}}^{3,6}=\int_{\Omega_{6}}|x|^{\tau}\|x\tilde{y}_{3},y\tilde{y}_{1}\|^{2\rho}d\mu(x,y,\tilde{\bfy}).
\]
Since this integral is given by that in Section~\ref{sub:Case-5} by
permuting the variables $\tilde{y}_{2}$ and $\tilde{y}_{3}$, the two
integrals coincide, that is, 
$I_{D_{2}}^{3,6}=I_{D_{2}}^{3,5}$.

\subsubsection{Subcase 7}

We have
\[
I_{D_{2}}^{3,7}=\int_{\Omega_{7}}|x|^{\tau}\|x\tilde{y}_{2},y\tilde{y}_{1}\|^{2\rho}d\mu(x,y,\tilde{\bfy}).
\]
and, for fixed $(X,Y,\tilde{Y}_{1},\tilde{Y}_{2},\tilde{Y}_{3})\in\N^{5}$,
\begin{multline*}
\mu(\{(x,y,\tilde{y}_{1},\tilde{y}_{2},\tilde{y}_{3}) \in\Omega_{7}\mid v(x)=X,v(y)=Y,v(\tilde{y}_{i})=\tilde{Y}_{i},\text{ for }i=1,2,3\})\\
 =\begin{cases}
(1-q^{-1})^{5}q^{-X-Y-\tilde{Y}_{1}-\tilde{Y}_{2}-\tilde{Y}_{3}} & \text{if }\tilde{Y}_{2}<X,\;\tilde{Y}_{2}<Y,\;\tilde{Y}_{2}<\tilde{Y}_{1},\;\tilde{Y}_{2}=\tilde{Y}_{3},\\
0 & \text{otherwise}.
\end{cases}
\end{multline*}
We thus get
\begin{align*}
I_{D_{2}}^{3,7} &=(1-q^{-1})^{5}\sum_{\substack{(X,Y,\tilde{Y}_{i})\in\N^{5}\\
\tilde{Y}_{2}<X,\;\tilde{Y}_{2}<Y,\;\tilde{Y}_{2}<\tilde{Y}_{1},\;\tilde{Y}_{2}=\tilde{Y}_{3}
}}q^{-X-Y-\tilde{Y}_{1}-\tilde{Y}_{2}-\tilde{Y}_{3}}q^{-\tau X}q^{-2\rho\min\{X+\tilde{Y}_{2},Y+\tilde{Y}_{1}\}}\\
 &=(1-q^{-1})^{5}\sum_{\substack{(X,Y,\tilde{Y}_{i})\in\N^{5}\\
\tilde{Y}_{2}<X,\;\tilde{Y}_{2}<Y,\;\tilde{Y}_{2}<\tilde{Y}_{1},\;\tilde{Y}_{2}=\tilde{Y}_{3}
}
}q^{-(1+\tau)X}q^{-Y}q^{-\tilde{Y}_{1}}q^{-2\tilde{Y}_{2}}q^{-2\rho\min\{X+\tilde{Y}_{2},Y+\tilde{Y}_{1}\}}.
\end{align*}
Set $X=\tilde{Y}_{2}+X'$, $Y=\tilde{Y}_{2}+Y'$, $\tilde{Y}_{1}=\tilde{Y}_{2}+\tilde{Y}_{1}'$,
for $X',Y',\tilde{Y}_{1}'\in\N$. We then get
\begin{align*}
\lefteqn{(1-q^{-1})^{-5}I_{D_{2}}^{3,7}}\\&=\sum_{(X',Y',\tilde{Y}_{1}',\tilde{Y}_{2})\in\N^{4}}q^{-(1+\tau)(\tilde{Y}_{2}+X')}q^{-(\tilde{Y}_{2}+Y')}q^{-(\tilde{Y}_{2}+\tilde{Y}_{1}')}q^{-2\tilde{Y}_{2}}q^{-2\rho\min\{\tilde{Y}_{2}+X'+\tilde{Y}_{2},\tilde{Y}_{2}+Y'+\tilde{Y}_{2}+\tilde{Y}_{1}'\}}\\
 &=\sum_{(X',Y',\tilde{Y}_{1}',\tilde{Y}_{2})\in\N^{4}}q^{-(5+\tau+4\rho)\tilde{Y}_{2}}q^{-(1+\tau)X'}q^{-Y'}q^{-\tilde{Y}_{1}'}q^{-2\rho\min\{X',Y'+\tilde{Y}_{1}'\}}\\
 &=\frac{q^{-5-\tau-4\rho}}{1-q^{-5-\tau-4\rho}}\sum_{(X',Y',\tilde{Y}_{1}')\in\N^{3}}q^{-(1+\tau)X'}q^{-(Y'+\tilde{Y}_{1}')}q^{-2\rho\min\{X',Y'+\tilde{Y}_{1}'\}}.
\end{align*}
Applying Lemma~\ref{lem:Identities}\,\eqref{enu:Identity-2} we get
$I_{D_{2}}^{3,7} = \frac{(1-q^{-1})}{q^{-1}}I_{D_{2}}^{3,5}$.

\subsection{Putting the pieces together}

The computations in Section~\ref{subsec:ID2.m=2} combine to an
explicit formula for
$$I_{D_2} = | \mathbb{A}^8(\F_q) \setminus
V_{3}(\F_{q})|I_{D_{2}}^{1}+(|V_{3}(\F_{q})|-|V_{2}(\F_{q})|)
I_{D_{2}}^{2} + |V_{2}(\F_{q})\setminus\{0\}| I_{D_{2}}^{3},$$
with $I_{D_2}^3 = p^{-5}\sum_{i=1}^7 I_{D_2}^{3,i}$.
Using Lemma~\ref{lem:V2V3-Fq} and equations
\eqref{equ:partition.Z.m=2} and \eqref{equ:ID1.solved}, we obtain,
with $a=q^{-1}$, $b=q^{-\tau}$, $c=q^{-\rho}$, that
\begin{align*}
\lefteqn{\calZ_{\lri}(\rho,\tau)=
\frac{ab(a-1)^2}{(1-a^5bc^4)(1-a^5bc^2)(1-a^2bc^2)^2(1-ab)}}\\
&\quad \Big(a^{22}b^4c^{10}+a^{21}b^4
  c^{10}+a^{20}b^4c^{10}+a^{19}b^4c^{10}+a^{18}b^4c^{10}\\
&\quad +a^{17}b^4c^{10}+a^{16}b^4c^{10}+a^{15}b^4c^{10}+a^{14}
  b^4c^{10}-2a^{20}b^3c^{10}-a^{19}b^3c^{10}-a^{18}b^3c^{10}\\
&\quad-2a^{17}b^3c^{10}-a^{15}b^3c^{10}-a^{14}b^3c^{
 10}-a^{13}b^3c^{10}-a^{19}b^3c^8-a^{18}b^3c^8-2a^{17}b^3c^8\\
&\quad-5a^{16}b^3c^8-4a^{15}b^3c^8-4a^{14}b^3c^8-3a^{13}b^3c^8-3a^{12}b^3c^8-a^{11}
  b^3c^8-a^{10}b^3c^8\\
&\quad-a^9b^3c^8+a^{18}b^2c^8+a^{17}b^2c^8+2a^{16}
  b^2c^8+5a^{15}b^2c^8+4a^{14}b^2c^8+4a^{13}b^2c^8\\
&\quad +3a^{12}b^2c^8+3a^{11}b^2c^8+a^{10}b^2c^8+a^9b^2
  c^8+a^8b^2c^8+a^{16}b^3c^6+a^{15}b^3c^6+a^{14}b^3c^6\\
&\quad -a^{12}b^3c^6-a^{11}b^3c^6-a^{10}b^3c^6-a^9b^3c^6-a
  ^{16}b^2c^6+a^{14}b^2c^6+2a^{13}b^2c^6+5a^{12}b^2c^6\\
&\quad+4a^{11}b^2c^6+6a^{10}b^2c^6+4a^9b^2c^6+3a^
  8b^2c^6+2a^7b^2c^6-2a^{13}bc^6-3a^{12}bc^6\\
&\quad-4a^{11}bc^6-4a^{10}bc^6-5a^9bc^6-3a^8bc^6-2a^
  7bc^6-2a^6bc^6-a^{13}b^2c^4-2a^{12}b^2c^4\\
&\quad-a^{11}b^2c^4-a^{10}b^2c^4+a^8b^2c^4+3a^7b^2c^4+a^6b^2
  c^4+a^5b^2c^4+a^4b^2c^4+a^{12}bc^4+2a^{11}bc^4\\
&\quad+a^9bc^4-2a^8bc^4-3a^7bc^4-4a^6bc^4-3a^5bc
  ^4-a^4bc^4-a^3bc^4+a^8c^4+2a^7c^4+2a^6c^4\\
&\quad+2a^5c^4+a^4c^4+a^8bc^2+a^7bc^2+2a^6bc^2-a^5bc^2-a^4
  bc^2-a^3bc^2-2a^2bc^2-a^7c^2\\
&\quad-a^6c^2-2a^5c^2+a^4c^2+a^3c^2+a^2c^2+2ac^2+a^5-a^4-a+1\Big)
\end{align*}

By \eqref{equ:zeta=integral.m2n1} and a direct computation, this
completes the proof of
Theorem~\ref{thm:m2n1}. 

Corollary~\ref{cor:m2n1} follows, for instance, along the lines of
\cite[Lemma~5.5]{duSWoodward/08}. Indeed, the invariant $\beta$
defined on \cite[p.~124]{duSWoodward/08} pertinent to the Euler
product
$$\prod_{\mfp}
(q^{8}t^{3}-q^{7}t^{2}+q^{6}t^{2}-q^{5}t^{2}-q^{3}t+q^{2}t-qt+1)$$
equals $\beta = \max\{8/3, 7/2,3/1\}= 7/2.$

\bibliographystyle{plain} \bibliography{masterbibliography21Feb15}

\end{document}